\newtheorem{theorem}{Theorem}
\newtheorem{proposition}[theorem]{Proposition}
\newtheorem{corollary}[theorem]{Corollary}
\newtheorem{definition}[theorem]{Definition}
\numberwithin{equation}{section}
\newcommand{\R}{\mathbb{R}}
\newcommand{\C}{\mathbb C}
\newcommand{\Z}{\mathbb Z}
\newcommand{\bee}{\begin{eqnarray*}}
	\newcommand{\eee}{\end{eqnarray*}}
\numberwithin{equation}{section}
\numberwithin{theorem}{section}
\begin{document}

%\doublespacing
 \onehalfspacing

\title[Global well-posedness of the defocusing NLW outside of a ball]{Global well-posedness of the defocusing nonlinear wave equation outside of a ball with radial data for $3<p<5$}
\author[Guixiang Xu]{Guixiang Xu}
\address{\hskip-1.15em Guixiang Xu 
	\hfill\newline Laboratory of Mathematics and Complex Systems,
	\hfill\newline Ministry of Education,
	\hfill\newline School of Mathematical Sciences,
	\hfill\newline Beijing Normal University,
	\hfill\newline Beijing, 100875, People's Republic of China.}
\email{guixiang@bnu.edu.cn}

\author[Pengxuan Yang]{Pengxuan Yang}
\address{\hskip-1.15em Pengxuan Yang 
	\hfill\newline Laboratory of Mathematics and Complex Systems,
	\hfill\newline Ministry of Education,
	\hfill\newline School of Mathematical Sciences,
	\hfill\newline Beijing Normal University,
	\hfill\newline Beijing, 100875, People's Republic of China.}
\email{pxyang@mail.bnu.edu.cn}

\author[Zhuohui You]{Zhuohui You}
\address{\hskip-1.15em Zhuohui You
	\hfill\newline Laboratory of Mathematics and Complex Systems,
	\hfill\newline Ministry of Education,
	\hfill\newline School of Mathematical Sciences,
	\hfill\newline Beijing Normal University,
	\hfill\newline Beijing, 100875, People's Republic of China.}
\email{zhuohui@mail.bnu.edu.cn}

%\date\today
\date{}

\subjclass[2010]{Primary: 35L05; Secondary: 35E05, 35L20}

\keywords{Dispersive estimate; Distorted Fourier transform; Exterior domain; Radial Sobolev inequality; Strichartz estimate; Wave equation} 

\begin{abstract}
We continue the study of the Dirichlet boundary value problem of nonlinear wave equation with radial data in the exterior $\Omega = \mathbb{R}^3\backslash \bar{B}(0,1)$.  We combine the distorted Fourier truncation method in \cite{Bourgain98:FTM}, the global-in-time (endpoint) Strichartz estimates in \cite{XuYang:NLW} with the energy method in \cite{GallPlan03:NLW} to prove the global well-posedness of the radial solution to the defocusing, energy-subcriticial nonlinear wave equation outside of a ball in  $\left(\dot H^{s}_{D}(\Omega) \cap L^{p+1}(\Omega) \right)\times \dot H^{s-1}_{D}(\Omega)$ with  $1-\frac{(p+3)(1-s_c)}{4(2p-3)}<s<1$, $s_c=\frac{3}{2}-\frac{2}{p-1} $, which extends the result for the cubic nonlinearity in \cite{XuYang:NLW} to the case $3<p<5$.  Except from the argument in \cite{XuYang:NLW}, another new ingredient is that we need make use of the radial Sobolev inequality to deal with the super-conformal nonlinearity in addition to the Sobolev inequality. 
\end{abstract}

\maketitle

%\setcounter{tocdepth}{3}
%\setcounter{secnumdepth}{5}

%\tableofcontents

\section{Introduction}

In this paper, we continue to study the defocusing, energy-subcriticial nonlinear wave equation
\begin{equation}\label{eq:pnlw}
    \begin{cases}
        \partial_t^2u-\Delta u+|u|^{p-1}u=0, &(t,x)\in I\times \Omega\\
        u(0,x)=u_0(x), &x\in \Omega\\
        \partial_t u(0,x)=u_1(x), &x\in \Omega\\
        u(t,x)=0, &x\in \partial\Omega
    \end{cases}
\end{equation}
where $3<p<5$, $0\in I$, the spatial domain $\Omega=\mathbb{R}^3\setminus \overline{B}(0,1)$, the function $u:I\times \Omega \longrightarrow \mathbb{R}$ and initial data $(u_0,u_1)$ is radial and belongs to  $\left(\dot{H}_D^s(\Omega) \cap L^{p+1}(\Omega)\right) \times \dot{H}_D^{s-1}(\Omega)$. The Sobolev space $\dot H^s_D(\Omega)$ of fractional order with Dirichlet boundary value will be introduced in Section \ref{sect:DFT and ST est}.

The smooth solution of \eqref{eq:pnlw} obeys the energy conservation law $E(u, u_t)(t) = E(u, u_t)(0)$ for any $t\in I$, where
\begin{equation*}
        E(u, u_t)(t):=\int_\Omega \frac{1}{2}|u_t(t)|^2+\frac{1}{2}|\nabla u(t)|^2+\frac{1}{p+1}|u(t)|^{p+1}\,dx.
\end{equation*}
%By scaling analysis, we call \eqref{eq:pnlw} the (conform) $\dot H^1$-critical wave equation if $p=3$ and $\dot H^1$-critical wave equation if $p=5$. Hence for $3<p<5$, we call it $\dot H^{s_c}$-critical wave equation with $s_c =\frac{3}{2}-\frac{2}{p-1}$, which is correspoding to the super-conformal, energy-subcritical wave equation.

There are lots of works focusing on the Cauchy problem of semilinear wave equation including the energy (sub)-critical wave equation (see   \cite{Dod:NLW:sct1, Dod:NLW:sct2, Dod:NLW:sct3, Dod:NLW:sct4,  JendrejL:NLW:SRC, Kenig:book, LindSog95:NLW, ShatS:NLW, Sogge95:book} and reference therein),  and there are also many important results about the energy-critical wave/Schr\"odinger equations outside of the domain (see \cite{Burq03: NLW CPDE, BurqLP:NLW, BurqP:def NLW: Neum prob,  DuyLaf:NLW:Neum prob, DuyYang24:NLW, KVZ16:NLS:Scat, LiSZ:NLS, LiXZ:NLS, SmithSogge95:NLW JAMS, SmithSogge00:NLW CPDE}  and reference therein).  The energy conservation law plays  a crutial rule in long time dynamics in both of the Cauchy problem (IVP) and the initial boundary value problem (IBVP) of the energy-critical wave/Schr\"odinger equations.

 We now give the definition of local-well-posedness of \eqref{eq:pnlw} in  $\dot H^{s}_{D}(\Omega) \times \dot H^{s-1}_{D}(\Omega)$. 
The equation \eqref{eq:pnlw} is said to be locally well-posed in $\dot H^{s}_{D}(\Omega) \times \dot H^{s-1}_{D}(\Omega)$  if there exists an open interval $I\subset \mathbb{R}$ containing $0$ such that:
$(1)$ there exists a unique solution in $\dot H^{s}_{D}(\Omega) \times \dot H^{s-1}_{D}(\Omega)$;
$(2)$ the solution is continuous in time, which means $(u, \partial_t u)\in C\left(I, \dot H^{s}_{D}(\Omega)\times \dot H^{s-1}_{D}(\Omega)\right)$;
$(3)$ the solution depends continuously on initial data $(u_0,u_1)$.

By the Strichartz estimate in \cite{DuyYang24:LW Str} (or Proposition \ref{prop: ST est} in \cite{XuYang:NLW} in the radial case), and standard Picard fixed point argument, we can obtain the following local well-posedness result of \eqref{eq:pnlw} in $\dot H^s_{D}(\Omega) \times \dot H^{s-1}_{D}(\Omega)$, $s_c\leq s< \frac32$.
\begin{theorem}\label{thm:nlu lwp}
	Let $3\leq p<5$ and $s_c=\frac{3}{2}-\frac{2}{p-1}$, the equation \eqref{eq:pnlw} is locally well-posed in  $\dot H^{s_c}_{D}(\Omega)\times \dot H^{s_c-1}_{D}(\Omega)$ on some interval $I=\left(-T, T\right)$. Moreover, the regularity of initial data is enough to give a lower bound on the time of well-posedness, that is, there exists some positive lifespan $T = T\big(\|(u_0, u_1)\|_{\dot H^{s}_{D}(\Omega) \times \dot H^{s-1}_{D}(\Omega)}\big)$ for any $s_c <s<\frac32$.
\end{theorem}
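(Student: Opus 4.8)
The plan is to prove Theorem~\ref{thm:nlu lwp} by a standard Picard fixed-point (contraction mapping) argument applied to the Duhamel formulation of \eqref{eq:pnlw}, using the Strichartz estimates referenced in the excerpt as the linear input. Writing $\dot H^s_D = \dot H^s_D(\Omega)$, the solution is sought as a fixed point of
\begin{equation*}
\Phi(u)(t) = \cos(t\sqrt{-\Delta_D})u_0 + \frac{\sin(t\sqrt{-\Delta_D})}{\sqrt{-\Delta_D}}u_1 - \int_0^t \frac{\sin((t-\tau)\sqrt{-\Delta_D})}{\sqrt{-\Delta_D}}\bigl(|u|^{p-1}u\bigr)(\tau)\,d\tau
\end{equation*}
in a ball of a suitable space-time norm. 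First I would fix the scaling-invariant (or, for $s>s_c$, slightly subcritical) pair of exponents: the natural choice is the Strichartz norm $L^q_t L^r_x(I\times\Omega)$ with $(q,r)$ admissible and chosen so that the nonlinearity $|u|^{p-1}u$ maps the solution space back into the dual Strichartz space controlling $\dot H^{s-1}_D$, i.e. one balances $p/q$ and $p/r$ against the admissibility and the gap $s-s_c$ via the fractional chain/product rule. Concretely I would work in $X_I := C(I;\dot H^s_D)\cap L^q_tL^r_x(I\times\Omega)$ together with a companion norm at the level of $\dot H^{s-1}_D$ for $\partial_t u$, exactly as in the radial Strichartz package of Proposition~\ref{prop: ST est} from \cite{XuYang:NLW}.

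The key steps, in order, are: (1) record the homogeneous and inhomogeneous Strichartz estimates on $\Omega$ for the Dirichlet wave propagator at regularity $s$ and $s-1$ — this is quoted, not reproved; (2) apply a fractional Leibniz rule (Christ--Weinstein type, valid on the exterior domain via the functional calculus for $-\Delta_D$ that defines $\dot H^s_D$) to estimate $\bigl\||\nabla|^{s-1}(|u|^{p-1}u)\bigr\|_{L^{q'}_tL^{r'}_x}$, reducing it to $\|u\|_{L^\infty_t\dot H^s_D}^{p-1}\|u\|_{L^q_tL^r_x}$-type products after Sobolev embedding $\dot H^s_D \hookrightarrow L^{\rho}$; (3) deduce the self-mapping bound $\|\Phi(u)\|_{X_I}\le C\|(u_0,u_1)\|_{\dot H^s_D\times\dot H^{s-1}_D} + C T^{\theta}\|u\|_{X_I}^p$ with a strictly positive power $\theta = \theta(p,s)>0$ coming from Hölder in time in the subcritical case $s>s_c$ (and $\theta=0$ but a smallness-of-the-linear-flow argument at $s=s_c$); (4) the analogous difference estimate $\|\Phi(u)-\Phi(v)\|_{X_I}\le CT^{\theta}(\|u\|_{X_I}+\|v\|_{X_I})^{p-1}\|u-v\|_{X_I}$, using that $z\mapsto|z|^{p-1}z$ is $C^1$ with Hölder-continuous derivative when $p>1$; (5) choosing $T$ small (depending only on $\|(u_0,u_1)\|_{\dot H^s_D\times\dot H^{s-1}_D}$ in the subcritical range, or on the profile of the data at the critical level) to close the contraction on a small ball, yielding existence, uniqueness, persistence of regularity, and — by the same estimates applied to differences of data — continuous dependence; (6) reading off that the lifespan bound $T=T(\|(u_0,u_1)\|_{\dot H^s_D\times\dot H^{s-1}_D})$ holds for every $s_c<s<\tfrac32$, which is precisely the "moreover" clause.

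The main obstacle I anticipate is step (2): the fractional Leibniz/chain rule and the Sobolev embedding must be performed with the \emph{Dirichlet} fractional Laplacian $|\nabla|^s = (-\Delta_D)^{s/2}$ on the exterior domain rather than on $\R^3$, so one cannot invoke the Euclidean Coifman--Meyer theory verbatim; the required estimates follow from the spectral-multiplier and square-function bounds for $-\Delta_D$ on $\Omega$ (equivalently, from the distorted Fourier transform machinery of Section~\ref{sect:DFT and ST est}) together with the radial reduction, but one must check that the exponents stay in the range where these tools apply (in particular $r$ must avoid the endpoint issues tied to the Dirichlet condition, and $s<\tfrac32$ keeps $\dot H^s_D$ inside the range where it coincides with the natural Sobolev space without boundary corrections). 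A secondary, more routine, point is handling $p$ non-integer in the difference estimate (step 4), which forces the use of the Hölder continuity of $D(|z|^{p-1}z)$ rather than an exact Leibniz expansion; for $3<p<5$ this is harmless since $p-1>2$. Once step (2) is in place on $\Omega$, the remaining steps are the standard contraction-mapping bookkeeping.
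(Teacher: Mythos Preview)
Your proposal is correct and matches the paper's approach: the paper states only that Theorem~\ref{thm:nlu lwp} follows ``by the Strichartz estimate \dots\ and standard Picard fixed point argument,'' and the detailed contraction argument it does carry out (Proposition~\ref{prop:gwp hp}) is exactly the scheme you outline. Your anticipated obstacle---the fractional chain rule for $(-\Delta_\Omega)^{s/2}$ on the exterior domain---is handled in the paper by the Sobolev-norm equivalence of Killip--Visan--Zhang (Proposition~2.3 and Corollary~\ref{cor:fcr}), valid precisely in the range $0\le s<\min(1+1/p,3/p)$, which covers $s<3/2$.

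One minor simplification in the paper's implementation is worth noting. Rather than placing the nonlinearity in a fractional space $L^{q'}_t\dot H^{s-1,r'}_{D}$ as you propose, the paper chooses the specific $\dot H^{s_c}$-admissible pair $(q,r)=\bigl(\tfrac{2p}{1+s_c},\tfrac{2p}{2-s_c}\bigr)$, whose dual $(q',r')=\bigl(\tfrac{2}{1+s_c},\tfrac{2}{2-s_c}\bigr)$ sits at Besov regularity $\rho_2=0$; then $\||u|^{p-1}u\|_{L^{q'}_tL^{r'}_x}\le\|u\|_{L^q_tL^r_x}^p$ is pure H\"older, so the contraction at the critical level needs no fractional Leibniz at all. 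The fractional chain rule (Corollary~\ref{cor:fcr}) is invoked only afterwards, to propagate the higher regularity $s>s_c$ and produce the positive power $T^\theta$ you describe. Your approach of carrying the fractional derivative through the contraction works as well; it is just slightly heavier.
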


%
%As shown in the local/global well-posedness theory in \cite{BurqP:def NLW: Neum prob, DuyLaf:NLW:Neum prob, SmithSogge95:NLW JAMS}, the (local-in-time) Strichartz estimate is one of useful linear estimates in long time behavior of the wave equations. In fact, 

In fact, the dispersive estimate and the (local-in-time) Strichartz estimate  for the wave/Schr\"odinger equations on the (exterior) domain themselves are extremely complicated, we can refer to \cite{BlairSS:NLS:St, Burq03: NLW CPDE, BurqLP:NLW, DuyYang24:LW Str, HidanoMSSZ:AST est, Iv:NLS St est, IvLasLebP:LW:dis est, IvLebeauP:LW:dis est, LiuSongZheng:NLS:exterior,  SmithSogge95:NLW JAMS, SmithSogge00:NLW CPDE, SmithSW:AST est, StaT:NLS St est}  and reference therein. 
 
The local well-posedness theory together with the energy conservation law implies that

\begin{theorem}
		Let $3\leq p<5$, the equation \eqref{eq:pnlw} is globally well-posed in  in $\left(\dot H^1_{D}(\Omega) \cap L^{p+1}(\Omega)\right) \times   L^2(\Omega)$.
	
\end{theorem}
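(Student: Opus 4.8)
The plan is to deduce this from the local well-posedness of Theorem~\ref{thm:nlu lwp} together with the conservation of $E(u,\partial_t u)$, via the standard continuation argument. The first point is that for $3\le p<5$ one has $s_c=\frac32-\frac2{p-1}<1$, so $s=1$ lies in the admissible range $s_c<s<\frac32$ of Theorem~\ref{thm:nlu lwp}; hence for (radial) data $(u_0,u_1)\in\dot H^1_D(\Omega)\times L^2(\Omega)$ there is a unique solution with $(u,\partial_t u)\in C\big((-T,T);\dot H^1_D(\Omega)\times L^2(\Omega)\big)$, depending continuously on the data, with a lifespan $T=T\big(\|(u_0,u_1)\|_{\dot H^1_D\times L^2}\big)>0$ that depends on the $\dot H^1_D\times L^2$ norm only. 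To see that the $L^{p+1}$ membership persists as well, I would write $u(t)-u_0=\int_0^t\partial_t u(s)\,ds$: since $\partial_t u\in C\big((-T,T);L^2(\Omega)\big)$ this puts $u(t)-u_0\in L^2(\Omega)$ for every $t$ in a bounded subinterval, while $u(t)-u_0\in\dot H^1_D(\Omega)$ as a difference of two elements of $C\big((-T,T);\dot H^1_D\big)$; hence $u(t)-u_0\in\dot H^1_D(\Omega)\cap L^2(\Omega)\hookrightarrow L^{p+1}(\Omega)$ by the Sobolev embedding on $\Omega$ (valid since $2\le p+1\le 6$ when $3\le p<5$), and with $u_0\in L^{p+1}(\Omega)$ this gives $u\in C\big((-T,T);L^{p+1}(\Omega)\big)$. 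Thus \eqref{eq:pnlw} is locally well-posed in $\big(\dot H^1_D(\Omega)\cap L^{p+1}(\Omega)\big)\times L^2(\Omega)$ with lifespan still controlled by the $\dot H^1_D\times L^2$ norm alone.

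Next I would establish that the energy identity $E(u,\partial_t u)(t)=E(u,\partial_t u)(0)$ holds on the whole lifespan for these energy-class solutions. For smooth solutions it follows by pairing \eqref{eq:pnlw} with $\partial_t u$ and integrating over $\Omega$, which gives $\frac{d}{dt}E(u,\partial_t u)=\int_{\partial\Omega}\partial_t u\,\partial_\nu u\,d\sigma=0$, the boundary integral vanishing because $u\equiv0$ on $\partial\Omega$ forces $\partial_t u\equiv0$ there. For general data one approximates by smooth, compactly supported radial data, uses the continuous dependence in $\big(\dot H^1_D\cap L^{p+1}\big)\times L^2$ together with the local-in-time Strichartz bounds from \cite{XuYang:NLW} (see also \cite{DuyYang24:LW Str}) to control the nonlinear term, and passes to the limit. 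In particular, on the maximal interval of existence $(-T_{\min},T_{\max})$ one obtains
\begin{equation*}
    \|\partial_t u(t)\|_{L^2(\Omega)}^2+\|\nabla u(t)\|_{L^2(\Omega)}^2+\tfrac{2}{p+1}\|u(t)\|_{L^{p+1}(\Omega)}^{p+1}=2E(u_0,u_1)\qquad\text{for all }t.
\end{equation*}

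Finally I would run the continuation argument. The displayed identity yields $\|(u(t),\partial_t u(t))\|_{\dot H^1_D\times L^2}\le\sqrt{2E(u_0,u_1)}=:M$ and $\|u(t)\|_{L^{p+1}}\le\big((p+1)E(u_0,u_1)\big)^{1/(p+1)}$, uniformly on $(-T_{\min},T_{\max})$. If $T_{\max}<\infty$, choose $t_0<T_{\max}$ with $T_{\max}-t_0<\tfrac12 T(M)$ and solve \eqref{eq:pnlw} with initial time $t_0$ and data $(u(t_0),\partial_t u(t_0))$, which has $\dot H^1_D\times L^2$ norm $\le M$ and lies in $\dot H^1_D\cap L^{p+1}$ by the first step; by uniqueness this extends $u$ to $(t_0-T(M),t_0+T(M))$, past $T_{\max}$, a contradiction. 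Hence $T_{\max}=+\infty$, symmetrically $T_{\min}=+\infty$, and continuous dependence on any fixed time interval follows from the local statement and the uniform bound, giving global well-posedness in $\big(\dot H^1_D(\Omega)\cap L^{p+1}(\Omega)\big)\times L^2(\Omega)$. The one genuinely delicate point is the rigorous justification of the energy identity for the low-regularity class — i.e. the approximation step — since the free wave propagator does not act boundedly on $L^{p+1}$; this is precisely where one must rely on the persistence-of-$L^{p+1}$ representation $u(t)-u_0=\int_0^t\partial_t u\,ds$ from the first step and on the Strichartz control of $|u|^{p-1}u$ to make the limiting argument go through.
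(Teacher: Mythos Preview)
Your proposal is correct and follows exactly the approach the paper indicates: the paper does not give an explicit proof of this theorem, stating only that ``the local well-posedness theory together with the energy conservation law implies'' the result, and your argument is precisely the standard expansion of that one-sentence justification --- apply Theorem~\ref{thm:nlu lwp} at $s=1$, use the conserved energy to bound the $\dot H^1_D\times L^2$ norm uniformly in time, and iterate. Your additional care with the persistence of the $L^{p+1}$ component and the approximation step for the energy identity is appropriate detail that the paper leaves implicit.
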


%
%In this paper, we consider the Dirichlet boundary value problem \eqref{eq:nlu} with radial data outside of the ball, there is a crucial analysis tool, that is, the explicit distorted Fourier transform adapted to the Dirichlet-Laplacian operator $-\Delta_{\Omega}$ under the radial assumption (see also \cite{LiSZ:NLS, LiXZ:NLS, Taylor:PDE:II}), to simplify the proof of the dispersive estimate and the global-in-time (endpoint) Strichartz estimate (see Theorem \ref{thm:St est} and Theorem \ref{thm:endp est}) of linear wave equation  outside of the ball with radial data. As for the distorted Fourier transform adapted to the potential scattering, we can refer to \cite{Agm75:Sch Op, CollG:NLS, DonK:PNLW, KriST:NLW bup, KriST:NLS}, and reference therein.

We combine the distorted Fourier truncation method, the global-in-time (endpoint) Strichartz estimates for radial data in \cite{XuYang:NLW}, the  (radial) Sobolev inequality with the energy method to prove the global well-posedness of the radial solution to the defocusing, cubic nonlinear wave equation outside of a ball in  $\left(\dot H^{s}_{D}(\Omega) \cap L^{p+1}(\Omega) \right)\times \dot H^{s-1}_{D}(\Omega)$ for $\frac{3}{4}<s<1$, where the solution maybe have infinite energy.

\begin{theorem}[\cite{XuYang:NLW}]\label{thm:nlu gwp} Let $p=3$, and $\frac{3}{4}<s<1$, then
	the equation \eqref{eq:pnlw} with radial data is global well-posedness in $\left(\dot H^{s}_{D}(\Omega) \cap L^{p+1}(\Omega) \right) \times \dot H^{s-1}_{D}(\Omega)$. More precisely,  for arbitrarily large time $T$, the solution obeys the following estimate
	\begin{align*}
	\left \| u \right\|_{C\left([0, T); \; \dot H^{s}_{D}(\Omega)\right)} \lesssim T^{\frac{3(1-s)(2s-1)}{4s-3}}.
	\end{align*}
\end{theorem}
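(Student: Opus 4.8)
The plan is to run Bourgain's high--low frequency truncation scheme, transplanted to the exterior domain through the distorted Fourier transform and the associated Littlewood--Paley projections $P_{\le N}$, $P_{>N}$ of $-\Delta$ with Dirichlet conditions (constructed in \cite{XuYang:NLW}), with the growth of the energy on the low$\,\times\,$high interaction part controlled by the global-in-time (endpoint) Strichartz estimates of \cite{XuYang:NLW} together with the energy method of \cite{GallPlan03:NLW}. For $p=3$ the scaling-critical exponent is $s_c=\tfrac12$, the critical space is $\critsp$, and since $u$ is real-valued, writing $u=v+w$ the nonlinearity splits as $u^3-v^3=3v^2w+3vw^2+w^3$, each term carrying at least one factor of $w$. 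Fix a large time $T$ and a parameter $N=N(T)\gg1$, and split $(u_0,u_1)=(P_{\le N}u_0,P_{\le N}u_1)+(P_{>N}u_0,P_{>N}u_1)=:(v_0,v_1)+(w_0,w_1)$; the spectral-multiplier bounds for $-\Delta$ on $\Omega$ give
\begin{align*}
\|(v_0,v_1)\|_{\dot H^{1}_{D}(\Omega)\times L^{2}(\Omega)}&\lesssim N^{1-s}\big\|(u_0,u_1)\big\|_{\dot H^{s}_{D}(\Omega)\times\dot H^{s-1}_{D}(\Omega)},\qquad \|v_0\|_{L^{4}(\Omega)}\lesssim\|u_0\|_{L^{4}(\Omega)},\\
\big\|(w_0,w_1)\big\|_{\critsp}&\lesssim N^{-(s-\frac12)}\big\|(u_0,u_1)\big\|_{\dot H^{s}_{D}(\Omega)\times\dot H^{s-1}_{D}(\Omega)}\ll1 .
\end{align*}
Let $v$ solve \eqref{eq:pnlw} with data $(v_0,v_1)$: by the global well-posedness theorem in $\big(\dot H^1_D(\Omega)\cap L^4(\Omega)\big)\times L^2(\Omega)$ stated above, it is global, and energy conservation gives $E(v,v_t)(t)\equiv E(v_0,v_1)\lesssim N^{2(1-s)}$.

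Next set $w:=u-v$, which solves $\partial_t^2 w-\Delta w=-(u^3-v^3)$ with Dirichlet boundary condition and data $(w_0,w_1)$, and decompose $w=w^\ell+w^h$, where $w^\ell(t)=S(t)(w_0,w_1)$ is the free Dirichlet wave evolution and $w^h$ the Duhamel correction. The $\critsp$ norm of $w^\ell$ is conserved, and by the global Strichartz estimates its critical Strichartz norm stays $\ll1$ on $[0,\infty)$. Since the data of $w$ is small in $\critsp$, a contraction argument on the critical Strichartz ball shows that on a time interval $[0,\delta]$ --- whose length $\delta$ is bounded below by a negative power of $1+E(v,v_t)(0)$, the large quantity being the coefficient $v^2$ of the linear-in-$w$ term $3v^2w$ --- the solution $w$ stays small in the critical Strichartz norm. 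The decisive point is then the quantitative smoothing estimate
\begin{equation*}
\big\|(w^h,\partial_t w^h)(t)\big\|_{\dot H^{1}_{D}(\Omega)\times L^{2}(\Omega)}\;\lesssim\;N^{-\beta}\,\big(1+E(v,v_t)(0)\big)^{\theta},\qquad t\in[0,\delta],
\end{equation*}
for suitable $\beta,\theta>0$: one inserts $u^3-v^3=3v^2w+3vw^2+w^3$ into Duhamel's formula and estimates term by term using the inhomogeneous global-in-time Strichartz estimates of \cite{XuYang:NLW} and the fractional Leibniz rule, the embeddings $\dot H^{1}_{D}(\Omega)\hookrightarrow L^{6}(\Omega)$ and $\dot H^{1/2}_{D}(\Omega)\hookrightarrow L^{3}(\Omega)$ turning the $v$-factors into controlled coefficients and the $w$-factors into small ones; the gain $N^{-\beta}$ is inherited from the smallness of $\|(w_0,w_1)\|_{\critsp}$, and the restriction $s>\tfrac34$ is precisely the requirement that this smoothing reaches the energy regularity $\dot H^1_D(\Omega)$.

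Finally, partition $[0,T]$ into consecutive intervals $[t_k,t_{k+1}]$ of the above length and iterate, at each $t_k$ absorbing the smooth Duhamel part into the finite-energy part: set $\widetilde v_0^{(k)}:=(v^{(k-1)},\partial_tv^{(k-1)})(t_k)+(w^{h,(k-1)},\partial_tw^{h,(k-1)})(t_k)\in\dot H^1_D(\Omega)\times L^2(\Omega)$, let $v^{(k)}$ be its global energy-conserving nonlinear evolution, and note that the linear remainder is at every stage exactly $S(t)(w_0,w_1)$, so that $w^{h,(k)}$ restarts from zero data at $t_k$. A Taylor-type bound for the change of the energy under a small $\dot H^1_D\times L^2$ perturbation, the smoothing estimate above, and $\|(v^{(k)},\partial_tv^{(k)})(t_{k+1})\|_{\dot H^1_D\times L^2}\lesssim E_k^{1/2}$ (writing $E_k$ for the conserved energy of $v^{(k)}$) give $E_{k+1}\le E_k+C\,N^{-\beta}(1+E_k)^{\theta'}$. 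Since $E_0\lesssim N^{2(1-s)}$ and crossing $[0,T]$ costs at most a power of $T$ many such steps ($\delta$ being bounded below by a negative power of $N$), a Gronwall-type bookkeeping --- after tracking all the exponents --- keeps $E_k\lesssim N^{2(1-s)}$ throughout provided $N\gtrsim T^{3(2s-1)/(4s-3)}$, a positive finite power precisely because $\tfrac34<s<1$. Taking $N\sim T^{3(2s-1)/(4s-3)}$, at the endpoint $u(T)$ is the sum of $S(T)(w_0,w_1)$ and a finite-energy part of energy $\lesssim N^{2(1-s)}$, so
\begin{equation*}
\|u(T)\|_{\dot H^{s}_{D}(\Omega)}\;\lesssim\;N^{1-s}+\big\|(u_0,u_1)\big\|_{\dot H^{s}_{D}(\Omega)\times\dot H^{s-1}_{D}(\Omega)}\;\lesssim\;N^{1-s}\;\lesssim\;T^{\frac{3(1-s)(2s-1)}{4s-3}},
\end{equation*}
and continuity in time, persistence in $L^4(\Omega)$, uniqueness and continuous dependence follow by applying Theorem \ref{thm:nlu lwp} (with $s_c=\tfrac12$) on each subinterval, the a priori bound ruling out blow-up.

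The main obstacle is the quantitative smoothing estimate of the second paragraph: one must place the Duhamel correction in the energy space with a genuine negative power of $N$ in front and only polynomial dependence on the large energy $\sim N^{2(1-s)}$ of $v$. Two features of the exterior setting are essential here --- the \emph{global-in-time} Strichartz estimates (including the endpoint) of \cite{XuYang:NLW}, without which the constants in the $\sim T/\delta$ iterations would deteriorate with $T$, and the precise matching of the Sobolev embeddings on $\Omega$ with the conformal exponent $p=3$, which is what produces the threshold $s>\tfrac34$; for the super-conformal range $3<p<5$ treated in the body of the paper this last point has to be reinforced by the radial Sobolev inequality on $\Omega$.
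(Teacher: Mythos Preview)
Your proposal follows the Bourgain/Kenig--Ponce--Vega \emph{iteration} scheme, whereas the paper (and the cited \cite{XuYang:NLW}, whose method is reproduced in Section~\ref{sect:pf rest} for general $p$) uses the Gallagher--Planchon energy method with the roles of $v$ and $w$ \emph{reversed}. In the paper's scheme it is the \emph{high}-frequency piece $w=P^{\Omega}_{\ge 2^J}u$ that solves the full cubic NLW---globally, by smallness in $\dot H^{1/2}_D\times\dot H^{-1/2}_D$ (cf.\ Proposition~\ref{prop:gwp hp})---while the \emph{low}-frequency piece $v$ solves the difference equation $\partial_t^2 v-\Delta v+|v+w|^{2}(v+w)-|w|^{2}w=0$. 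There is then no subdivision of $[0,T]$ and no iterative reabsorption: a single energy-increment inequality (the $p=3$ case of Proposition~\ref{prop: v energy est}) is closed directly on $[0,T]$ by balancing $2^{J}$ against $T$. Your iterative route is in principle also viable and is closer in spirit to the original arguments of \cite{Bourgain98:FTM,KenigPV00:NLW} on $\mathbb R^3$; the paper's route is cleaner because the global-in-time endpoint Strichartz control of the small piece $w$ closes the bound in one step rather than $\sim T/\delta$ steps, and it is this structure that carries over verbatim to $3<p<5$.

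There is, however, a genuine gap in your last display. From the energy bound on the ``finite-energy part'' you jump to $\|u(T)\|_{\dot H^s_D}\lesssim N^{1-s}$, but $\dot H^1_D(\Omega)$ does \emph{not} embed into $\dot H^s_D(\Omega)$ for $s<1$ (there is no Poincar\'e inequality on the exterior domain), so a bound on the energy alone gives no $\dot H^s_D$ control of that part. The paper addresses this, for its own decomposition, by separately estimating the Duhamel contribution to $v$ in $L^2(\Omega)$---using $|\sin((t-s)\lambda)/\lambda|\le t-s$ on the distorted Fourier side---and then interpolating between $L^2$ and $\dot H^1_D$; see the final subsection of Section~\ref{sect:pf rest}. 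In your scheme you would need the analogous $L^2$ control of each $w^{h,(k)}$ and to propagate its growth through the iteration before interpolating; without this the passage from the $\dot H^1_D$ bound to the claimed $\dot H^s_D$ bound is unjustified.
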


In this paper, we extend the above result from the conformal critical case $p=3$ to  the super-conformal crtical case $3<p<5$.   Main result in this paper is the following.

\begin{theorem}\label{thm:GWP}
    Let $3<p<5$, $s_c=\frac{3}{2}-\frac{2}{p-1}$ and $1-\frac{(p+3)(1-s_c)}{4(2p-3)}<s<1$, then
	the equation \eqref{eq:pnlw} with radial data is global well-posedness in $\left(\dot H^{s}_{D}(\Omega) \cap L^{p+1}(\Omega) \right) \times \dot H^{s-1}_{D}(\Omega)$. More preciesly,  for arbitrarily large time $T$, the solution obeys the following estimate
	\begin{align*}
	 \| u \|_{L^\infty_t \left([0,T); \dot H^{s}_{D}( \Omega)\right)}\lesssim T^{2(1-s)+\frac{\left[7p-3-(6p-6)s\right](1-s)}{2\left[4(2p-3)s-4(2p-3)+(p+3)(1-s_c)\right]} }.
	\end{align*}
\end{theorem}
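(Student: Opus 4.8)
The plan is to adapt the Fourier truncation (I-method-style) argument of Bourgain used in \cite{XuYang:NLW} for the cubic case $p=3$, replacing energy estimates based solely on the Sobolev inequality with a combination of the Sobolev inequality and the radial Sobolev inequality to absorb the super-conformal nonlinearity $|u|^{p-1}u$ with $3<p<5$. First I would split the radial initial data $(u_0,u_1)\in(\dot H^s_D(\Omega)\cap L^{p+1}(\Omega))\times\dot H^{s-1}_D(\Omega)$ at frequency $N$ (defined via the distorted Fourier transform associated to $-\Delta$ on $\Omega$ with Dirichlet condition) into a low-frequency part $(u_0^{\mathrm{lo}},u_1^{\mathrm{lo}})=P_{\le N}(u_0,u_1)$, which lies in the energy space $\dot H^1_D(\Omega)\times L^2(\Omega)$ with $\|(u_0^{\mathrm{lo}},u_1^{\mathrm{lo}})\|_{\dot H^1_D\times L^2}\lesssim N^{1-s}\|(u_0,u_1)\|_{\dot H^s_D\times\dot H^{s-1}_D}$, and a high-frequency remainder $(v_0,v_1)=P_{>N}(u_0,u_1)$ which is small in the critical-type norm. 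Let $u_{\mathrm{lin}}$ solve the linear Dirichlet wave equation with data $(v_0,v_1)$; by the global-in-time (endpoint) Strichartz estimates of \cite{XuYang:NLW} for radial data, $u_{\mathrm{lin}}$ has small space-time norm. Then I would set $w=u-u_{\mathrm{lin}}$, so that $w$ solves a forced defocusing NLW with zero data and forcing expressed through $u=w+u_{\mathrm{lin}}$; the point is that $w$ should have better regularity (roughly in the energy space) on each unit time interval.

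Next I would run the modified energy estimate. Define $\mathcal E(t)=E(w,w_t)(t)=\int_\Omega\frac12|w_t|^2+\frac12|\nabla w|^2+\frac1{p+1}|w|^{p+1}\,dx$ and differentiate in time. Using the equation for $w$, $\dot{\mathcal E}(t)$ reduces to an integral of the form $\int_\Omega w_t\big(|u|^{p-1}u-|w|^{p-1}w\big)\,dx$ plus terms coming from the linear flow; expanding the difference, one bounds $|\,|u|^{p-1}u-|w|^{p-1}w\,|\lesssim |u_{\mathrm{lin}}|\big(|w|^{p-1}+|u_{\mathrm{lin}}|^{p-1}\big)$. The crux is to control these multilinear terms by (powers of) $\mathcal E(t)$ times space-time norms of $u_{\mathrm{lin}}$ that are small thanks to the Strichartz bound: here I would interpolate $L^{p+1}$-norms of $w$ (from the potential energy), $\dot H^1$-norms (from the kinetic energy), and for the spatial decay in the exterior region I would invoke the radial Sobolev inequality $\||x|^{1/2}f(x)\|_{L^\infty(\Omega)}\lesssim\|f\|_{\dot H^1_D(\Omega)}$ (valid for radial functions and respecting the Dirichlet condition since $\Omega$ is an exterior domain) to handle the extra powers of $u$ beyond the conformal case. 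This yields a differential inequality $\dot{\mathcal E}(t)\lesssim \eta(t)\big(\mathcal E(t)+\mathcal E(t)^{\theta}\big)$ with $\int_0^T\eta\lesssim 1$ (or $\lesssim T^{\text{small}}$), so that Gronwall gives $\sup_{[0,T]}\mathcal E(t)\lesssim \mathcal E(0)+1\lesssim N^{2(1-s)}+1$.

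Finally I would close the iteration by translating this bound on $\mathcal E$ back to the original solution: $\|u(t)\|_{\dot H^s_D}\lesssim \|u_{\mathrm{lin}}(t)\|_{\dot H^s_D}+\|w(t)\|_{\dot H^s_D}\lesssim \|(v_0,v_1)\|_{\dot H^s_D\times\dot H^{s-1}_D}+\|w(t)\|_{\dot H^1_D}\lesssim 1+N^{1-s}$, uniformly for $t\in[0,T]$. Since the high-frequency data feeds back into a forcing term over time, on each unit interval the Strichartz smallness is lost by an amount controlled by $\mathcal E$, so one iterates $\sim T$ times; requiring the accumulated error to remain consistent forces a relation between $N$ and $T$, and optimizing $N=N(T)$ — with the admissible range of $s$ dictated precisely by the constraint that the exponent $\theta$ in the Gronwall step stays subcritical, which is where the lower bound $s>1-\frac{(p+3)(1-s_c)}{4(2p-3)}$ comes from — gives the stated polynomial-in-$T$ growth $\|u\|_{L^\infty_t([0,T);\dot H^s_D)}\lesssim T^{2(1-s)+\frac{[7p-3-(6p-6)s](1-s)}{2[4(2p-3)s-4(2p-3)+(p+3)(1-s_c)]}}$. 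The main obstacle I expect is the energy-increment estimate for the super-conformal nonlinearity: unlike the $p=3$ case, a plain Sobolev embedding does not leave enough integrability to absorb all powers of $u$ into $\mathcal E(t)$ and the Strichartz-small factor simultaneously, so the delicate balancing of Hölder exponents between $\dot H^1_D$, $L^{p+1}$, the radial Sobolev $L^\infty$-weighted bound, and the endpoint Strichartz spaces — while keeping the resulting power of $\mathcal E$ below the threshold that would break Gronwall — is the technical heart of the argument.
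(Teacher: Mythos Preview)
Your overall strategy---frequency splitting via the distorted Fourier transform, an energy argument on the low-frequency piece, and the radial Sobolev inequality to absorb the extra powers for $p>3$---is the paper's strategy. Two differences are worth flagging. First, the paper evolves the \emph{high}-frequency piece by the full nonlinear equation (small $\dot H^{s_c}$ data gives global Strichartz bounds), not linearly; either choice works, but note that in your setup $w=u-u_{\mathrm{lin}}$ carries the low-frequency initial data, not zero data. Second, the energy increment inequality the paper obtains has a superlinear power $E_T^{9(p-1)/(2(p+3))}$ on the right, so Gronwall is not the mechanism; instead one runs a bootstrap on $[0,T]$ with $T$ and the truncation height $2^J$ tied by the relation $T^{1/2}\,2^{J(s_c-s)}\,(2^{2J(1-s)})^{\frac{9(p-1)}{2(p+3)}}\approx 2^{2J(1-s)}$, which is exactly where the threshold $s>1-\frac{(p+3)(1-s_c)}{4(2p-3)}$ emerges.

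The genuine gap is your final step ``$\|w(t)\|_{\dot H^s_D}\lesssim\|w(t)\|_{\dot H^1_D}$''. There is no embedding $\dot H^1_D\hookrightarrow\dot H^s_D$ for $s<1$; the energy bound gives no control on the low frequencies of $w$, and the nonlinearity pumps energy into them. The paper closes this by a \emph{separate} $L^2(\Omega)$ estimate on the Duhamel part of the low-frequency solution, using $|\sin((t-s)\lambda)/\lambda|\le t-s$ on the distorted Fourier side to get
\[
\Big\|\int_0^t\tfrac{\sin((t-s)\sqrt{-\Delta_\Omega})}{\sqrt{-\Delta_\Omega}}F(v,w)(s)\,ds\Big\|_{L^2(\Omega)}\lesssim \int_0^t(t-s)\,\|F(v,w)(s)\|_{L^2(\Omega)}\,ds\lesssim T^{2+\cdots},
\]
and then interpolating this $L^2$ bound with the $\dot H^1$ bound coming from $E_T$. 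That interpolation is precisely what produces the exponent $2(1-s)+\frac{[7p-3-(6p-6)s](1-s)}{2[4(2p-3)s-4(2p-3)+(p+3)(1-s_c)]}$; without the $L^2$ step you cannot recover any $\dot H^s$ control, much less the stated rate. A related point: the radial Sobolev inequality you invoke should be the interpolated form $r^{4/(p+3)}|v(r)|\lesssim\|v\|_{L^{p+1}}^{(p+1)/(p+3)}\|v\|_{\dot H^1_D}^{2/(p+3)}$ (Proposition~\ref{prop:rad sob est}), which uses the potential energy as well; the purely $\dot H^1$-based version you wrote would give a larger power of $E_T$ in the increment and shrink the admissible range of $s$.
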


In addition to the distorted Fourier truncation method, the Sobolev inequality and the radial endpoint Strichartz estimate in \cite{XuYang:NLW}, the new ingredient is that we need use the radial Sobolev inequality (see Proposition \ref{prop:rad sob est}) to deal with the super-conformal critical nonlinearity. 

Lastly, we organize the paper as follows. In Section \ref{sect:DFT and ST est}, we recall the distorted Fourier transform in \cite{LiSZ:NLS,  XuYang:NLW} and  the associated function space and the Littlewood-Paley theory associated to the Dirichlet-Laplacian operator $\Delta_{\Omega}$ in \cite{KVZ16:IMRN Riesz tf, XuYang:NLW}, in addtion, we recall the global-in-time (endpoint) Strichartz estimate with radial data outside of a ball in \cite{XuYang:NLW} and establish the radial Sobolev inequality in Proposition \ref{prop:rad sob est}. In Section \ref{sect:pf rest}, we follow the argument in \cite{GallPlan03:NLW, XuYang:NLW} to combine the Fourier truncation method, the energy method, the (endpoint) Strichartz estimate for radial data with the radial Sobolev inequality to prove the result in Theorem \ref{thm:GWP}.  

\noindent \subsection*{Acknowledgements.}

The authors are supported  by National Key Research and Development Program of China (No. 2020YFA0712900) and by NSFC (No. 12371240, No. 12431008).

\section{distorted Fourier transform and Strichartz estimates}\label{sect:DFT and ST est}
In this section, we will give some notation used in this paper, and recall the distorted Fourier transform on $\Omega=\mathbb{R}^3\setminus \overline{B}(0,1)$,  the  global-in-time Strichartz estimate, and the endpoint Strichartz estimate  for the radial case in \cite{XuYang:NLW} and establish radial Sobolev inequality in Proposition \ref{prop:rad sob est}.

\subsection{Notation} We denote the 3D Dirichlet-Laplacian operator out of the ball by $-\Delta_\Omega$. Let $I\subset \mathbb{R}$ be an interval of time, we write $L_t^qL_x^p(I\times \Omega)$ denote the Banach space with norm
\[\|u\|_{L_t^qL_x^p(I\times \Omega)}=\left(\int_I\left(\int_\Omega |u(t,x)|^p\,dx\right)^\frac{q}{p}\right)^\frac{1}{q}.\]

We write $X\lesssim Y$ or $Y\gtrsim X$ to indicate $X\leqslant CY$ for some constant $C>0$ not depending on $X$ or $Y$. We use $\mathcal{O}(Y)$ to denote any quantity $X$ such that $|X|\lesssim Y$. We use the notation $X\approx Y$, if satisfying $X\lesssim Y\lesssim X$. For any positive number $p\in [1,\infty]$,we write $p'=\frac{p}{p-1}$ as the H\"older conjugate exponent of $p$.  

\subsection{Distorted Fourier transform} Now we recall the distorted Fourier transform associated to the Dirichlet-Laplacian operator $-\Delta_\Omega$ in \cite{LiSZ:NLS, XuYang:NLW}.  The spectral resolution for radial functions on $\Omega=\mathbb{R}^3\setminus \overline{B}(0,1)$ is expressed by  radial, generalized eigenfunctions
\[-\Delta_\Omega e_\lambda=\lambda^2 e_\lambda\]
for $\lambda>0$ which satisfy the Sommerfeld radiation condition, namely
\begin{equation}
    e_\lambda(r)=\frac{\sin \lambda(r-1)}{r},\,\,\, r\geqslant 1.
\end{equation}
For the radial, tempered distributions $u\in \mathcal{S}'(\mathbb{R}^3)$ which supported on $\Omega$, we denote the distorted Fourier transform by
\begin{equation}
    \mathcal{F}_Du(\lambda)=\frac{\sqrt{2}}{\sqrt{\pi}}\int_1^\infty \frac{\sin \lambda(s-1)}{s}u(s)s^2\,ds.
\end{equation}

Note that  the following resolution of identity
\begin{equation*}
            \frac{2}{\pi}\int_{-\infty}^\infty e_\lambda(r)e_\lambda(s)\,d\lambda=\frac{\delta(r-s)}{s^2}
\end{equation*}
from which it follows that $$\mathcal{F}_D^{-1}\mathcal{F}_D u = u$$ 
for the radial function $u\in  \mathcal{S}(\mathbb{R}^3)$ supported in $\Omega$,  where $\mathcal{F}_D^{-1}$ is the formal adjoint, defined on the tempered distributions $v$ as the restriction to $r= |x|\geqslant 1$ of
\begin{equation}
    \mathcal{F}_D^{-1}v(r)=\frac{\sqrt{2}}{\sqrt{\pi}}\int_0^\infty \frac{\sin \lambda(r-1)}{r}v(\lambda)\,d\lambda.
\end{equation}
Consequently, $u\rightarrow \mathcal{F}_Du$ induces an isometric map
\[\mathcal{F}_D: L^2([1,\infty),s^2ds)\rightarrow L^2([0,\infty),d\lambda).\]

Given a bounded function $m(\lambda)$, which for convenience we assume to be defined on all of $\mathbb{R}$ and even in $\lambda$, and radial function $u\in C_c^\infty(\Omega)$, we define
\begin{equation}\label{m}
    m(\sqrt{-\Delta_\Omega})u(r)=\mathcal{F}_D^{-1}(m(\cdot)\mathcal{F}_Du)(r).
\end{equation}
This defines a functional calculus on $L^2_{rad}(\Omega)$ and takes the expression as
\begin{align*}
m(\sqrt{-\Delta_{\Omega}}) f (r) = \int^{\infty}_{1} K_m(r, s) f(s)\, s^2 \, ds
\end{align*}
with 
\begin{align*}
K_m(r, s) = \frac{2}{\pi}\, \int^{\infty}_{0} e_{\lambda}(r)\, e_{\lambda}(s)\, m(\lambda)\; d\lambda. 
\end{align*}

In general, we have the following Mikhlin Multiplier theorem.
\begin{theorem}[\cite{KVZ16:IMRN Riesz tf, LiSZ:NLS, XuYang:NLW}]\label{thm:multiplier}
	Suppose $m: [0, \infty) \rightarrow \C$ obeys 
	\begin{align*}
	\left|\partial^{k}_{\lambda} m (\lambda)\right| \lesssim \lambda^{-k}
	\end{align*}
	for all integer $ k \in [0, 2] $. Then $m(\sqrt{-\Delta_{\Omega}}) $, which we define via the $L^2$ functional calculus, extends uniquely from $L^2(\Omega)\cap L^{p}(\Omega)$ to a bounded operator on $L^p(\Omega)$, for all $1<p<\infty$.
\end{theorem}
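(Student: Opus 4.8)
The plan is to obtain the general statement from the standard Calder\'on--Zygmund/spectral-multiplier machinery for nonnegative self-adjoint operators whose heat semigroup has a Gaussian kernel, in the spirit of \cite{KVZ16:IMRN Riesz tf, LiSZ:NLS}, and then to record the more elementary computation available in the radial sector, which is all that is used later in the paper. For the first route, observe that the Dirichlet heat kernel on the exterior domain satisfies the pointwise Gaussian upper bound
\begin{equation*}
0\le e^{t\Delta_\Omega}(x,y)\le (4\pi t)^{-3/2}\,e^{-|x-y|^2/(4t)},\qquad x,y\in\Omega,\ t>0,
\end{equation*}
which is immediate from the parabolic maximum principle, since extension by zero exhibits the Dirichlet heat semigroup on the subdomain $\Omega$ as dominated by the free heat semigroup on $\mathbb{R}^3$. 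With such Gaussian bounds, the Mikhlin--H\"ormander spectral multiplier theorem applies: if $m$ is bounded and $\sup_{t>0}\|\eta(\cdot)\,m(t\,\cdot)\|_{C^\sigma(\mathbb{R})}<\infty$ for a fixed bump function $\eta$ supported away from the origin and some $\sigma>3/2$ (half the dimension), then $m(\sqrt{-\Delta_\Omega})$ is of weak type $(1,1)$ and extends to a bounded operator on $L^p(\Omega)$ for all $1<p<\infty$. A one-line scaling computation shows that the hypothesis $|\partial_\lambda^k m(\lambda)|\lesssim\lambda^{-k}$ for $k\in\{0,1,2\}$ gives exactly $\sup_{t>0}\|\eta(\cdot)\,m(t\,\cdot)\|_{C^2}<\infty$; since $2=\lfloor 3/2\rfloor+1$ is the Mikhlin threshold in dimension three, this is precisely enough, and uniqueness of the extension follows because $L^2(\Omega)\cap L^p(\Omega)$ contains $C_c^\infty(\Omega)$, which is dense in $L^p(\Omega)$.

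For a self-contained argument in the radial case I would instead use the explicit kernel. The identity $\sin a\sin b=\tfrac12(\cos(a-b)-\cos(a+b))$ applied to $e_\lambda(r)=\sin\lambda(r-1)/r$, together with the evenness of $m$, yields
\begin{equation*}
K_m(r,s)=\frac{1}{\pi rs}\int_0^\infty\big(\cos\lambda(r-s)-\cos\lambda(r+s-2)\big)\,m(\lambda)\,d\lambda=\frac{c}{rs}\big(\check m(r-s)-\check m(r+s-2)\big),
\end{equation*}
where $\check m$ is the (suitably normalized) one-dimensional Fourier transform of the even extension $M(\lambda):=m(|\lambda|)$. Since $|\partial^k M(\lambda)|\lesssim|\lambda|^{-k}$ for $k\le 2$, the function $M$ is a one-dimensional Mikhlin multiplier, so $\check m=c_0\delta_0+K$ with $K$ a Calder\'on--Zygmund kernel on $\mathbb{R}$ satisfying $|K(a)|\lesssim|a|^{-1}$ and $|K'(a)|\lesssim|a|^{-2}$. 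Writing $f(s)=s^{-2/p}F(s)$ identifies $L^p_{\mathrm{rad}}(\Omega)$ with $L^p([1,\infty),dr)$ and turns $f\mapsto\int_1^\infty K_m(r,s)f(s)\,s^2\,ds$ into the operator on $L^p([1,\infty),dr)$ with kernel $c\,(r/s)^{2/p-1}\big(\check m(r-s)-\check m(r+s-2)\big)$, where $2/p-1\in(-1,1)$. In the $\check m(r-s)$ term I would split $(r/s)^{2/p-1}=1+\big((r/s)^{2/p-1}-1\big)$: the first part is the restriction to $[1,\infty)$ of convolution with the $L^p(\mathbb{R})$-bounded kernel $\check m$, hence bounded on $L^p([1,\infty),dr)$; the remainder kernel $\big((r/s)^{2/p-1}-1\big)\check m(r-s)$ no longer sees the point mass (the factor vanishes on the diagonal) and is a slowly varying perturbation handled by Schur's test. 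In the $\check m(r+s-2)$ term the argument $r+s-2=(r-1)+(s-1)$ is strictly positive on the interior of $\Omega$, so the point mass drops out entirely and the kernel is dominated by $(r/s)^{2/p-1}\big((r-1)+(s-1)\big)^{-1}$, a Hilbert-type kernel on the shifted half-line, which is bounded on $L^p$ for all $1<p<\infty$ by Schur's test with a weight of the form $(s-1)^{-\delta}s^{-\theta}$ adapted to the corner at $r=1$ and to the decay at infinity (using $\int_0^\infty w^{-\delta}(1+w)^{-1}\,dw<\infty$ for $0<\delta<1$). Summing the estimates gives boundedness on $L^p_{\mathrm{rad}}(\Omega)$, and the unique extension follows by density.

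I do not anticipate a deep obstacle: this is essentially a classical result and the work lies in assembling known ingredients. On the first route the only points to check are the Gaussian upper bound (an immediate domain-monotonicity/maximum-principle fact for Dirichlet conditions) and the elementary scaling verification that Mikhlin's two-derivative hypothesis implies the $C^\sigma$-localized condition with $\sigma>3/2$. On the concrete radial route the one slightly delicate point is the non-translation-invariant ``reflection'' term $\check m(r+s-2)$ together with the weight $(r/s)^{2/p-1}$ produced by the change of measure: one must verify that its singular part contributes nothing because $r+s-2$ never vanishes in the interior of $\Omega$, and that its regular part, being no worse than a Hilbert-type kernel on a half-line, is bounded on $L^p$ for every $1<p<\infty$ rather than only on $L^2$. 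This is the one place where the geometry of the exterior domain genuinely enters, and it is dispatched by a Schur test with a weight adapted to the boundary $|x|=1$.
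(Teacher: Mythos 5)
The paper does not actually prove this theorem; it is imported wholesale from the cited references, so there is no in-paper argument to compare against. Your two routes are, respectively, the proofs in those references: the Gaussian-heat-kernel/abstract spectral multiplier route is how \cite{KVZ16:IMRN Riesz tf} obtains the result for general exterior convex domains (the heat kernel bound being supplied by \cite{Zhang:Heat kernel} or by domain monotonicity, as you say), while the explicit radial kernel computation via $\sin a\sin b=\tfrac12(\cos(a-b)-\cos(a+b))$ and the decomposition into a translation-invariant piece $\check m(r-s)$ plus a reflected piece $\check m(r+s-2)$ is exactly the mechanism in \cite{LiSZ:NLS}. Both outlines are correct and either suffices for the theorem as used in this paper (only radial functions ever appear downstream).

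Two points in the radial route deserve more care than your sketch gives them. First, for a Mikhlin multiplier $M$ on $\R$ the inverse Fourier transform is in general a tempered distribution whose restriction off the origin is a Calder\'on--Zygmund kernel (e.g.\ $\mathrm{p.v.}\,1/a$ for the Hilbert transform), not literally $c_0\delta_0$ plus a locally integrable kernel; the clean way to phrase your splitting is to subtract the full convolution operator $f\mapsto \check m * f$ (bounded on $L^p(\R)$ by Mikhlin) and observe that the \emph{difference} has a genuinely integrable kernel because the factor $(r/s)^{2/p-1}-1$ vanishes to first order on the diagonal. Second, the crude global bound $|\check m(a)|\lesssim |a|^{-1}$ is borderline for the far off-diagonal Schur test when $p$ is near $1$ or $\infty$ if one forgets the constraint $r,s\ge 1$; the test does close for every $1<p<\infty$, but only after choosing the Schur weight $s^{-\alpha}$ with $\alpha$ in the nonempty window $1-\tfrac2p<\alpha<2-\tfrac2p$ and using that both variables are bounded below by $1$. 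Neither issue is an obstruction, but both are where the actual work lies if one writes the radial proof out in full.
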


With the help of the heat kernel estimate in \cite{Zhang:Heat kernel} and harmonic analysis outside of a convex obstacle in \cite{KVZ16:IMRN Riesz tf}, we can define the homogeneous Sobolev space outside a ball.
\begin{definition}
    For $s>0$ and $1<p<\infty$, $\dot{H}^{s,p}(\Omega)$ denote the completions of $C_c^\infty(\Omega)$ under the norm 
    \[\|u\|_{\dot{H}^{s,p}(\Omega)}:=\|(-\Delta_\Omega)^\frac{s}{2}u\|_{L^p(\Omega)}.\]
    When $p=2$, we write $\dot{H}^s_D(\Omega)$ for $\dot{H}^{s,2}_D(\Omega)$, respectively.
\end{definition}
\begin{definition}
    The space $\dot{H}^{s,p}_{00}\Omega)$ is the completion of $C_c^\infty(\Omega)$ in $\dot{H}^{s,p}(\mathbb{R}^3)$.
\end{definition}

 \begin{proposition}[\cite{KVZ16:IMRN Riesz tf}]\label{prop:dense}
	For $1<p<\infty$, and $s<1+\frac{1}{p}$, $C^{\infty}_{c}(\Omega)$ is dense in $\dot H^{s,p}_{D}(\Omega)$.
\end{proposition}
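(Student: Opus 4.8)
The density asserted in Proposition \ref{prop:dense} is due to \cite{KVZ16:IMRN Riesz tf}; the plan is to reproduce its cutoff-and-regularize scheme in the present exterior-of-a-ball setting. The whole argument runs on one engine: the Gaussian heat-kernel bound for $e^{t\Delta_\Omega}$ from \cite{Zhang:Heat kernel}, which through Theorem \ref{thm:multiplier} and standard complex-interpolation/functional-calculus bookkeeping yields uniform $L^p(\Omega)$-boundedness of the Littlewood--Paley projections $P_N = \phi(-\Delta_\Omega/N^2)$, the square-function characterization $\|f\|_{\dot H^{s,p}_D(\Omega)} \approx \bigl\|(\sum_N N^{2s}|P_N f|^2)^{1/2}\bigr\|_{L^p(\Omega)}$, and boundedness of the imaginary powers $(-\Delta_\Omega)^{i\tau}$ with at most polynomial growth in $\tau$. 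These are precisely the harmonic-analysis facts ``outside a convex obstacle'' recorded in \cite{KVZ16:IMRN Riesz tf, XuYang:NLW}.

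\emph{Step 1: frequency localization.} Given $f \in \dot H^{s,p}_D(\Omega)$, the square-function characterization shows $P_{1/N < \sqrt{-\Delta_\Omega} \le N}\, f \to f$ in $\dot H^{s,p}_D(\Omega)$ as $N \to \infty$, so it suffices to approximate a radial function $g$ whose distorted Fourier support lies in a fixed annulus $\{\tfrac1N \le \lambda \le N\}$. For such $g$ one has $g = m(\sqrt{-\Delta_\Omega})g$ for some $m \in C_c^\infty((0,\infty))$; expressing $g$ through the kernel $K_m(r,s)$ from \eqref{m} and integrating by parts in $\lambda$, using the explicit shape $e_\lambda(r) = \sin\lambda(r-1)/r$, produces rapid off-diagonal decay of $K_m$, whence $g$ is smooth on $\overline\Omega$, decays rapidly as $r \to \infty$, and vanishes to infinite order at $\partial\Omega = \{r = 1\}$.

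\emph{Step 2: spatial truncation and error splitting.} Fix a radial $\theta \in C_c^\infty(\mathbb{R}^3)$ with $\theta \equiv 1$ near $\overline{B}(0,1)$ and set $g_R(x) = \theta(x/R)g(x) \in C_c^\infty(\Omega)$ (it vanishes near $\partial\Omega$ since $g$ does, and is compactly supported by construction). Then $g - g_R = (1-\theta(\cdot/R))g$, and one writes $(-\Delta_\Omega)^{s/2}(g - g_R)$ as the ``tail'' $(1 - \theta(\cdot/R))(-\Delta_\Omega)^{s/2}g$, which tends to $0$ in $L^p(\Omega)$ by dominated convergence because $(-\Delta_\Omega)^{s/2}g \in L^p(\Omega)$, plus the commutator $[(-\Delta_\Omega)^{s/2}, \theta(\cdot/R)]g$. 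A fractional-Leibniz (Kato--Ponce type) estimate adapted to $-\Delta_\Omega$, combined with the scaling $\|\nabla^k(\theta(\cdot/R))\|_{L^\infty} \lesssim R^{-k}$, drives the commutator to $0$ as $R \to \infty$; a final small dilation/mollification, if desired, keeps the approximants inside $C_c^\infty(\Omega)$ without affecting the limit.

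\emph{Where the hypothesis enters, and the main obstacle.} The delicate ingredient is the product/commutator estimate of Step 2: for the Dirichlet Laplacian it is governed by the interaction of $(-\Delta_\Omega)^{s/2}$ with the boundary, and the clean route is the norm comparison $\|(-\Delta_\Omega)^{\sigma/2} h\|_{L^p(\Omega)} \approx \|(-\Delta_{\mathbb{R}^3})^{\sigma/2}\tilde h\|_{L^p(\mathbb{R}^3)}$, with $\tilde h$ the extension by zero, which \cite{KVZ16:IMRN Riesz tf} establish exactly in the range $0 \le \sigma < 1 + \tfrac1p$; once it is in hand, Step 2 reduces to classical product and commutator estimates on $\mathbb{R}^3$. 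Past the threshold $\sigma = 1 + \tfrac1p$ the Dirichlet condition forces extra compatibility at $\partial\Omega$ and this comparison fails, which is why the statement is restricted to $s < 1 + \tfrac1p$. The main obstacle is therefore not the soft cutoff procedure but securing that exterior/whole-space norm equivalence together with the attendant fractional-Leibniz bound near the obstacle — which is precisely the content we import from \cite{KVZ16:IMRN Riesz tf}.
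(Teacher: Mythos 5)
The paper offers no proof of this proposition --- it is quoted directly from \cite{KVZ16:IMRN Riesz tf} --- so your sketch can only be judged on its own terms. Note also that with the definition adopted in this paper, $\dot H^{s,p}_D(\Omega)$ \emph{is} the completion of $C_c^\infty(\Omega)$ under $\|(-\Delta_\Omega)^{s/2}\cdot\|_{L^p}$, so the literal statement is close to tautological; the substantive content imported from \cite{KVZ16:IMRN Riesz tf} is the identification of that completion with a concrete space of functions, equivalently the approximability by test functions of any $f$ with $(-\Delta_\Omega)^{s/2}f\in L^p(\Omega)$.

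The genuine gap is in your Step 2, and it sits exactly where the hypothesis $s<1+\tfrac1p$ does its work. The approximant $g_R=\theta(x/R)\,g$ with $\theta\equiv 1$ near $\overline{B}(0,1)$ is \emph{not} in $C_c^\infty(\Omega)$: near $\partial\Omega$ it coincides with $g$, and a frequency-localized $g=m(\sqrt{-\Delta_\Omega})\,g$ does not vanish identically in a neighborhood of $r=1$ --- it does not even vanish there to infinite order, contrary to your claim. Since $e_\lambda(r)=\sin\lambda(r-1)/r$ satisfies $e_\lambda(1)=0$ but $\partial_r e_\lambda(1)=\lambda\neq 0$, the kernel $K_m(r,s)$, and hence $g$, vanishes at $r=1$ only to first order generically. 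So your construction truncates only at spatial infinity; the truncation \emph{near the obstacle} --- multiplying by a cutoff supported in $\{r>1+\delta\}$ and showing the error tends to $0$ in $\dot H^{s,p}_D(\Omega)$ as $\delta\to 0$ --- is the hard step, and it is precisely there that one must exploit the first-order vanishing at $\partial\Omega$ together with Hardy-type inequalities, which control $\mathrm{dist}(x,\partial\Omega)^{-s}$ weights in $L^p$ exactly in the range $s<1+\tfrac1p$. Dismissing this as ``a final small dilation/mollification \dots without affecting the limit'' skips the only nontrivial part of the argument; the rest of the sketch (Littlewood--Paley reduction, truncation at infinity, the commutator at scale $R$) is routine and insensitive to the threshold $s=1+\tfrac1p$ that the proposition is careful to record.
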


The equivalence between $\dot H^{s,p}_{D}(\Omega) $ and $\dot H^{s,p}_{00}(\Omega)$ with proper exponents then follows from the square function estimate, Riesz potential estimate and the Hardy's inequalities for the exterior domain $\Omega$ and the whole space $\R^3$ in \cite{KVZ16:IMRN Riesz tf, MuscaluS:book1, Stein:book:SI}
\begin{proposition}
    $($\cite{KVZ16:IMRN Riesz tf}$)$. Let $\Omega=\mathbb{R}^3\setminus \overline{B}(0,1)$. Suppose $1<p<\infty$ and $0\leqslant s<\min\{1+\frac{1}{p},\frac{3}{p}\}$, then we have
    \begin{equation}
        \|(-\Delta_\Omega) ^\frac{s}{2}u\|_{L^p(\Omega)}\approx_{d,p,s} \|(-\Delta_{\mathbb{R}^3})^\frac{s}{2}u\|_{L^p(\Omega)}
    \end{equation}
    Thus $\|u\|_{\dot{H}^s_D(\Omega)}=\|u\|_{\dot{H}^s_{0}(\mathbb{R}^3)}$ for these values of the parameters.
\end{proposition}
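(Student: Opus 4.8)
The plan is to deduce the equivalence from the analytic toolbox available outside a convex obstacle: the Gaussian upper bounds for the Dirichlet heat kernel $e^{t\Delta_\Omega}(x,y)$ and for its spatial gradient established in \cite{Zhang:Heat kernel}; the harmonic analysis built on them in \cite{KVZ16:IMRN Riesz tf}---square function estimates, the Mikhlin multiplier theorem (Theorem~\ref{thm:multiplier}), and the $L^p(\Omega)$-boundedness, $1<p<\infty$, of the Riesz transform $\nabla(-\Delta_\Omega)^{-1/2}$; and the classical fractional Hardy and Hardy--Littlewood--Sobolev inequalities on $\Omega$ and on $\R^3$. By Proposition~\ref{prop:dense} both $\dot H^{s,p}_D(\Omega)$ and the closure of $C^\infty_c(\Omega)$ in $\dot H^{s,p}(\R^3)$ are completions of $C^\infty_c(\Omega)$, so it suffices to compare the two norms on $u\in C^\infty_c(\Omega)$; for such $u$ let $\widetilde u\in C^\infty_c(\R^3)$ be the extension by zero, which, $u$ being compactly supported in the open set $\Omega$, satisfies $\widetilde{\Delta_\Omega u}=\Delta_{\R^3}\widetilde u$.

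First I would dispose of $0\le s\le 1$. The case $s=0$ is the identity. For $s=1$, writing $\nabla u=\bigl(\nabla(-\Delta_\Omega)^{-1/2}\bigr)(-\Delta_\Omega)^{1/2}u$ and using the Riesz transform bound gives $\|\nabla u\|_{L^p(\Omega)}\lesssim\|(-\Delta_\Omega)^{1/2}u\|_{L^p(\Omega)}$, while $(-\Delta_\Omega)^{1/2}u=-\bigl((-\Delta_\Omega)^{-1/2}\operatorname{div}\bigr)\nabla u$ with the bracketed operator---the adjoint of $-\nabla(-\Delta_\Omega)^{-1/2}$---bounded on $L^p(\Omega)$ because that transform is bounded on $L^{p'}(\Omega)$; hence $\|(-\Delta_\Omega)^{1/2}u\|_{L^p(\Omega)}\approx\|\nabla u\|_{L^p(\Omega)}=\|\nabla\widetilde u\|_{L^p(\R^3)}\approx\|(-\Delta_{\R^3})^{1/2}\widetilde u\|_{L^p(\R^3)}$, the last step by the classical Riesz transform bound on $\R^3$. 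The intermediate range $0<s<1$ follows by complex (Stein) interpolation, the bounded imaginary powers supplied by Theorem~\ref{thm:multiplier} identifying the complex interpolation spaces of the $\dot H^{\sigma,p}$ scale on $\Omega$ and on $\R^3$. When $p=2$ this already yields the sharper final assertion: the identity $\widetilde{\Delta_\Omega u}=\Delta_{\R^3}\widetilde u$ makes the comparison an \emph{isometry} at $s\in\{0,1,2\}$, so complex interpolation along the scale of fractional powers of the two self-adjoint Laplacians upgrades the equivalence to the isometry $\|u\|_{\dot H^{s}_{D}(\Omega)}=\|u\|_{\dot H^{s}_{0}(\R^3)}$ in the stated range.

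The genuinely new range is $1<s<\min\{1+\tfrac1p,\tfrac3p\}$, which I would handle by comparing the two fractional Laplacians head-on. Inserting the subordination formula $\mu^{s/2}=c_s\int_0^\infty\bigl(1-e^{-t\mu}\bigr)\,t^{-1-s/2}\,dt$ (valid for every $0<s<2$) into the functional calculi of $-\Delta_\Omega$ and $-\Delta_{\R^3}$ and restricting to $\Omega$ yields
\[
\bigl[(-\Delta_\Omega)^{s/2}u-(-\Delta_{\R^3})^{s/2}\widetilde u\bigr]\big|_{\Omega}
 \;=\; c_s\int_0^\infty (Q_t u)\;t^{-1-s/2}\,dt,
 \qquad
 Q_t u(x)=\int_\Omega q_t(x,y)\,u(y)\,dy,
\]
where $q_t(x,y)=e^{t\Delta_{\R^3}}(x,y)-e^{t\Delta_\Omega}(x,y)\ge 0$ obeys, as the half-space model (and its convex-obstacle analogue via the parabolic maximum principle) shows, $q_t(x,y)\lesssim t^{-3/2}\exp\bigl(-c\,(|x-y|^2+\rho(x)\rho(y))/t\bigr)$ with $\rho(x)=\operatorname{dist}(x,\partial\Omega)$. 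Performing the $t$-integration leaves a kernel $\lesssim\bigl(|x-y|^2+\rho(x)\rho(y)\bigr)^{-(3+s)/2}$, and a weighted Schur estimate in the $\sqrt t$-collar of $\partial\Omega$ bounds the resulting operator, applied to $u$, by $\bigl\|\rho^{-s}u\bigr\|_{L^p(\Omega)}$; the same estimate controls the part of $(-\Delta_{\R^3})^{s/2}\widetilde u$ that lives on $\overline B(0,1)$. Since $u$ vanishes near $\partial\Omega$, the fractional Hardy inequality gives $\bigl\|\rho^{-s}u\bigr\|_{L^p(\Omega)}\lesssim\|(-\Delta_{\R^3})^{s/2}\widetilde u\|_{L^p(\R^3)}$, and it is exactly here that the hypothesis $s<1+\tfrac1p$ is used; the other restriction $s<\tfrac3p$ serves only to keep the Riesz potentials $(-\Delta)^{-s/2}$ arising in the Hardy step within the Hardy--Littlewood--Sobolev range. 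Combining the two sides and passing to the completion over $C^\infty_c(\Omega)$ finishes the proof.

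I expect the main obstacle to be the sharp boundary behaviour of the heat kernel---both the gradient bound $|\nabla_x e^{t\Delta_\Omega}(x,y)|\lesssim t^{-2}e^{-c|x-y|^2/t}$ underlying the Riesz transform estimate and the collar decay factor $\exp(-c\,\rho(x)\rho(y)/t)$ of the corrector $q_t$. Neither holds for the Dirichlet Laplacian on a general domain, and both genuinely use the convexity of the obstacle; extracting them from \cite{Zhang:Heat kernel} and then verifying that the weighted kernel above still defines an operator bounded on $L^p(\Omega)$ for every $s$ up to---but not including---the threshold $1+\tfrac1p$ is the delicate part.
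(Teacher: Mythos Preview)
The paper does not actually prove this proposition: it is quoted from \cite{KVZ16:IMRN Riesz tf}, and the only argument offered is the single sentence preceding the statement, namely that the equivalence ``follows from the square function estimate, Riesz potential estimate and the Hardy's inequalities for the exterior domain $\Omega$ and the whole space $\R^3$.'' There is therefore no proof in the paper to compare your sketch against beyond that one line.

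That said, your sketch is compatible with the tools the paper invokes and is a reasonable reconstruction. The endpoint cases $s=0,1$ via the Riesz transform $\nabla(-\Delta_\Omega)^{-1/2}$, the interpolation for $0<s<1$, and the use of the fractional Hardy inequality to absorb the boundary contribution are exactly the ingredients the paper names. Your treatment of $1<s<\min\{1+\tfrac1p,\tfrac3p\}$ via the subordination formula and a pointwise bound on the heat-kernel corrector $q_t=e^{t\Delta_{\R^3}}-e^{t\Delta_\Omega}$ is a somewhat different route from the square-function comparison emphasized in \cite{KVZ16:IMRN Riesz tf}; the latter proceeds by writing both $(-\Delta_\Omega)^{s/2}$ and $(-\Delta_{\R^3})^{s/2}$ through their vertical square functions and comparing those, which avoids having to prove the sharp collar estimate $q_t(x,y)\lesssim t^{-3/2}\exp\bigl(-c(|x-y|^2+\rho(x)\rho(y))/t\bigr)$ you rely on. Your approach is more hands-on with the kernel and makes the role of the threshold $s<1+\tfrac1p$ (through Hardy) very transparent, at the price of needing that corrector bound; the square-function route is cleaner but less explicit about where the restriction enters. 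Either way, the substantive input---the heat-kernel gradient bound of \cite{Zhang:Heat kernel} and convexity of the obstacle---is the same, as you correctly identify at the end.

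One small caution: the exact isometry claim for $p=2$ follows, as you say, from $\widetilde{\Delta_\Omega u}=\Delta_{\R^3}\widetilde u$ and the spectral theorem at $s\in\{0,1,2\}$, but complex interpolation between Hilbert spaces gives equality of the interpolation \emph{spaces}, and one must check that the complex interpolation norm coincides with the fractional-power norm on each side (which it does here via the functional calculus). This is routine but worth stating, since the paper's ``$=$'' could otherwise be read merely as norm equivalence.
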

As an direct application, we have the following fractional chain rule for the exterior domain.
\begin{corollary}[Fractional chain rule, \cite{KVZ16:IMRN Riesz tf}]\label{cor:fcr}
Suppose $F\in C^1(\mathbb{C})$, $s\in (0,1]$, and $1<p,p_1,p_2<\infty$ are such that  $\frac{1}{p}=\frac{1}{p_1}+\frac{1}{p_2}$ and $0<s<\min\left( 1+\frac{1}{p_2},   \frac{3}{p_2}\right) $ , then we have
    \begin{equation}
        \|(-\Delta_\Omega)^\frac{s}{2}F(u)\|_{L^p(\Omega)}\lesssim \|F'(u)\|_{L^{p_1}(\Omega)}\|(-\Delta_\Omega)^\frac{s}{2}u\|_{L^{p_2}(\Omega)}.
    \end{equation}
\end{corollary}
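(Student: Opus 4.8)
The plan is to transfer the estimate to $\mathbb{R}^3$, apply the classical Euclidean fractional chain rule there, and transfer back, using as a bridge the norm equivalence $\|(-\Delta_\Omega)^{s/2}v\|_{L^q(\Omega)}\approx\|(-\Delta_{\mathbb{R}^3})^{s/2}v\|_{L^q(\mathbb{R}^3)}$ for $v\in C^\infty_c(\Omega)$, i.e. the $L^q$ version of the identification $\dot H^{s,q}_D(\Omega)=\dot H^{s,q}_{00}(\Omega)$ valid in the stated subcritical range. First I would reduce to $u\in C^\infty_c(\Omega)$: since $s\le1<1+\tfrac{1}{p_2}$, Proposition \ref{prop:dense} makes this class dense in $\dot H^{s,p_2}_D(\Omega)$, so the general case follows by density once the inequality is known there. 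I would also normalize $F(0)=0$, which costs nothing since replacing $F$ by $F-F(0)$ changes neither $F'$ nor $(-\Delta_\Omega)^{s/2}F(u)$. With these reductions $u$ is literally a Schwartz function on $\mathbb{R}^3$ supported away from $\overline{B}(0,1)$ and $F(u)$ is a compactly supported $C^1$ function on $\mathbb{R}^3$, so the whole Euclidean toolkit applies verbatim.

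Next I would check that the admissibility hypothesis propagates to every exponent that occurs. From $\tfrac1p=\tfrac1{p_1}+\tfrac1{p_2}$ with $p_1>1$ we get $p<p_2$, hence $\min\{1+\tfrac1p,\tfrac3p\}>\min\{1+\tfrac1{p_2},\tfrac3{p_2}\}>s$, so the norm equivalence of \cite{KVZ16:IMRN Riesz tf} is available both at exponent $p$ (applied to $F(u)$) and at exponent $p_2$ (applied to $u$). Chaining it with the classical fractional chain rule of Christ and Weinstein on $\mathbb{R}^3$ (for $0<s<1$; when $s=1$ one just uses $\nabla F(u)=F'(u)\nabla u$ together with H\"older, and the constraint $s<\tfrac3{p_2}$ then forces $p_2<3$, hence $p<3$, consistent with the Sobolev range) gives
\begin{align*}
\|(-\Delta_\Omega)^{s/2}F(u)\|_{L^p(\Omega)}
&\approx\|(-\Delta_{\mathbb{R}^3})^{s/2}F(u)\|_{L^p(\mathbb{R}^3)}\\
&\lesssim\|F'(u)\|_{L^{p_1}(\mathbb{R}^3)}\,\|(-\Delta_{\mathbb{R}^3})^{s/2}u\|_{L^{p_2}(\mathbb{R}^3)}\\
&\approx\|F'(u)\|_{L^{p_1}(\Omega)}\,\|(-\Delta_\Omega)^{s/2}u\|_{L^{p_2}(\Omega)},
\end{align*}
where in the last line I used that $u\equiv0$ on $B(0,1)$ together with $F'(0)=0$, which holds for the power-type nonlinearities $F(u)=|u|^{p-1}u$ of interest here; in general $\|F'(u)\|_{L^{p_1}(\mathbb{R}^3)}$ exceeds $\|F'(u)\|_{L^{p_1}(\Omega)}$ only by the harmless constant $|F'(0)|\,|B(0,1)|^{1/p_1}$.

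The step I expect to carry all the real content --- and hence to be the main obstacle, in the sense that everything nontrivial is hidden there --- is the norm equivalence between the exterior and Euclidean Sobolev norms itself. This is exactly where the exterior-domain harmonic analysis enters: the Gaussian heat-kernel bounds of \cite{Zhang:Heat kernel} and the square-function, Riesz-transform and Hardy estimates of \cite{KVZ16:IMRN Riesz tf}, and it is the source of the subcritical restriction $0<s<\min\{1+\tfrac1{p_2},\tfrac3{p_2}\}$ on $s$. Granting that input, the remaining work is bookkeeping: verifying the exponent inequalities above, the normalization $F(0)=0$, and the density passage. Should one prefer an intrinsic argument avoiding the transference, one could instead run Bony's paraproduct decomposition directly on $\Omega$ with the Littlewood--Paley pieces $\psi(\sqrt{-\Delta_\Omega}/N)$, invoking Theorem \ref{thm:multiplier} for their $L^p$-boundedness; but given the norm equivalence this is unnecessary, and the corollary is indeed the direct application it is advertised to be.
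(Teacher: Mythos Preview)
Your proposal is correct and is precisely the ``direct application'' the paper has in mind: the paper gives no proof of this corollary beyond the sentence preceding it, and your argument---transfer to $\mathbb{R}^3$ via the norm equivalence of \cite{KVZ16:IMRN Riesz tf}, apply the Euclidean fractional chain rule, transfer back---is exactly how that direct application is meant to be read. Your checks that the admissibility condition propagates from $p_2$ to $p$ (since $p<p_2$) and your handling of the $F(0)=0$ normalization and the $s=1$ endpoint are all in order; note also that your worry about $F'(0)\neq 0$ is moot here, since $\Omega$ has infinite measure and $u$ is compactly supported, so $\|F'(u)\|_{L^{p_1}(\Omega)}=\infty$ whenever $F'(0)\neq 0$ and the inequality is vacuous in that case.
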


As stated in the introduction, we also need the following radial Sobolev inequality to control the growth of the energy and $L^2(\Omega)$ of the solution $v$ for the difference equation with low-frequency-localized data because of the super-conformal nonlinearity in next section. 
\begin{proposition}[Radial Sobolev inequality]\label{prop:rad sob est}
Fixed $1\leqslant p<\infty$. For a radial function $u\in \dot{H}^1_{D}(\Omega)\cap L^p(\Omega)$, then there exists a constant $C$ such that
    \begin{equation}
        r^\frac{4}{p+2}|u(r)|\leq C \|u\|_{L^p(\Omega)}^{\frac{p}{p+2}} \|u\|_{\dot H^1_{D, rad}(\Omega)}^{\frac{2}{p+2}}
    \end{equation}
    for almost everywhere $r\geq 1$. Particularly, we have $\dot{H}^1_{D,rad}(\Omega)\subset L^{\infty}(\Omega)$.
\end{proposition}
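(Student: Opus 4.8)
The plan is to prove the pointwise bound first for radial $u\in C_c^\infty(\Omega)$ and then to recover the general statement by approximation. For the approximation step, given radial $u\in\dot H^1_D(\Omega)\cap L^p(\Omega)$ I would extend $u$ by zero to $\mathbb{R}^3$, replace $u(x)$ by $u(\lambda x)$ with $\lambda\to1^-$ (pushing the support into the open domain $\Omega$), mollify, and truncate near spatial infinity; each of these operations converges in both $\dot H^1_D(\Omega)$ and $L^p(\Omega)$, and, by the crude radial bound recorded at the end of the argument, $\dot H^1_D$-convergence of radial functions is locally uniform on $\{|x|\ge1\}$, so the inequality survives the limit. It therefore suffices to treat radial $u\in C_c^\infty(\Omega)$. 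Writing $u=u(r)$ for $r=|x|\ge1$, the quantities $\|u\|_{L^p(\Omega)}^p$ and $\|u\|_{\dot H^1_{D, rad}(\Omega)}^2$ are comparable (the second after integrating by parts, since $\|u\|_{\dot H^1_D(\Omega)}=\|\nabla u\|_{L^2(\Omega)}$ for such $u$) to $\int_1^\infty|u(r)|^p r^2\,dr$ and $\int_1^\infty|u'(r)|^2 r^2\,dr$ respectively. I set $N:=\sup_{r\ge1}r^{\frac{4}{p+2}}|u(r)|$, which is finite because $u$ has compact support, and aim to prove $N^{(p+2)/2}\lesssim\|u\|_{L^p(\Omega)}^{p/2}\|u\|_{\dot H^1_{D, rad}(\Omega)}$.

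The heart of the argument is a self-improving inequality for $N$. Since $|u(s)|^{p+2}$ vanishes for $s$ large, the fundamental theorem of calculus and $r^4\le s^4$ for $s\ge r$ give, for every $r\ge1$,
\[
r^4|u(r)|^{p+2}=-\,r^4\int_r^\infty\frac{d}{ds}|u(s)|^{p+2}\,ds\;\le\;(p+2)\int_r^\infty s^4|u(s)|^{p+1}|u'(s)|\,ds .
\]
Factoring the integrand as $s^4|u|^{p+1}|u'|=\big(s^3|u|^{p+1}\big)\big(s|u'|\big)$ and applying Cauchy--Schwarz yields
\[
r^4|u(r)|^{p+2}\;\lesssim\;\Big(\int_r^\infty s^6|u(s)|^{2p+2}\,ds\Big)^{1/2}\Big(\int_r^\infty s^2|u'(s)|^2\,ds\Big)^{1/2}\;\lesssim\;\Big(\int_r^\infty s^6|u(s)|^{2p+2}\,ds\Big)^{1/2}\|u\|_{\dot H^1_{D, rad}(\Omega)} .
\]
The key algebraic point is the exact identity $s^6|u(s)|^{2p+2}=\big(s^2|u(s)|^p\big)\big(s^{4}|u(s)|^{p+2}\big)$, in which the second factor is precisely $\big(s^{4/(p+2)}|u(s)|\big)^{p+2}\le N^{p+2}$; pulling it out of the integral gives
\[
\int_r^\infty s^6|u(s)|^{2p+2}\,ds\;\le\;N^{p+2}\int_r^\infty s^2|u(s)|^p\,ds\;\lesssim\;N^{p+2}\,\|u\|_{L^p(\Omega)}^p .
\]

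Combining the last two displays and taking the supremum over $r\ge1$ (noting $\sup_r r^4|u(r)|^{p+2}=N^{p+2}$) produces
\[
N^{p+2}\;\lesssim\;N^{(p+2)/2}\,\|u\|_{L^p(\Omega)}^{p/2}\,\|u\|_{\dot H^1_{D, rad}(\Omega)} .
\]
If $N=0$ there is nothing to prove; otherwise I divide by $N^{(p+2)/2}$ and raise the result to the power $\tfrac{2}{p+2}$, which is exactly the claimed inequality (with a constant depending only on $p$). For the final assertion $\dot H^1_{D, rad}(\Omega)\subset L^\infty(\Omega)$ I would run the same scheme with $|u|^2$ in place of $|u|^{p+2}$: from $|u(r)|^2\le2\int_r^\infty|u||u'|\,ds$, Cauchy--Schwarz and Hardy's inequality (which controls $\int_r^\infty s^{-2}|u|^2\,ds$ by a constant times $r^{-2}\|u\|_{\dot H^1_D(\Omega)}^2$) give the crude radial bound $r^{1/2}|u(r)|\lesssim\|u\|_{\dot H^1_{D, rad}(\Omega)}$, hence $|u(r)|\lesssim\|u\|_{\dot H^1_{D, rad}(\Omega)}$ for $r\ge1$; equivalently, this is the classical radial embedding $\dot H^1_{rad}(\mathbb{R}^3)\hookrightarrow\{\,|v(x)|\lesssim|x|^{-1/2}\|v\|_{\dot H^1}\,\}$ applied to the zero-extension of $u$, together with $|x|^{-1/2}\le1$ on $\Omega$. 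The one genuinely delicate point I expect is ensuring $N<\infty$ \emph{a priori}, which is what legitimizes the division in the absorption step; this is precisely why the estimate is established for compactly supported $u$ first and only afterwards passed to the limit.
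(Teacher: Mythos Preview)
Your proof is correct, but it follows a genuinely different path from the paper's. The paper differentiates the quantity $r^2 u(r)^{(p+2)/2}$ and integrates from the boundary $r=1$ outward (exploiting $u(1)=0$); this produces an extra term $\int 2s\,u^{(p+2)/2}\,ds$ that is handled by Cauchy--Schwarz together with Hardy's inequality $\int_1^\infty |u|^2\,ds\lesssim\int_1^\infty |u_s|^2 s^2\,ds$. The result is then obtained \emph{directly}, with no bootstrap. By contrast, you differentiate $|u|^{p+2}$, integrate from $r$ outward to infinity (exploiting compact support), and close via an absorption argument on $N=\sup_{r\ge1}r^{4/(p+2)}|u(r)|$. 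Your route avoids Hardy's inequality entirely for the main estimate, at the cost of needing $N<\infty$ a priori---which, as you correctly note, is precisely why the reduction to $C_c^\infty(\Omega)$ is essential in your scheme, whereas in the paper's argument it is merely a convenience. Both approaches are short and clean; the paper's is a Strauss-type direct computation, yours is in the self-improving style more common in modern PDE arguments.
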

\begin{proof} We follow the similar argument as that in Radial Lemma in \cite{Strauss77:NLS:Soliton}. By the density result in Proposition \ref{prop:dense}, it suffices to show the inequality for the radial function $u\in C^{\infty}_{c}(\Omega)$. By the fundamental theorem of calculus, we have
	\begin{align*}
	\left(r^2u(r)^{\frac{p+2}{2}}\right)_r = 2r u(r)^{\frac{p+2}{2}} + \frac{p+2}{2}r^2u(r)^{\frac{p}{2}} u_r.
	\end{align*}	
Integrating on $(1,r)$, we have
		\begin{align*}
\left|	r^2u(r)^{\frac{p+2}{2}} \right| = & \left| \int^{r}_{1}2 s u(s)^{\frac{p+2}{2}} + \frac{p+2}{2}s^2u(s)^{\frac{p}{2}} u_s \; ds \right| \\
\lesssim &  \left( \int^r_1 |u|^p\, s^2\; ds \right)^{\frac{1}{2}}  \left( \int^r_1  \frac{|u|^2}{s^2}\, s^2 \; ds \right)^{\frac{1}{2}} +  \left( \int^r_1 |u|^p\, s^2\; ds \right)^{\frac{1}{2}}  \left( \int^r_1  |u_s|^2\, s^2 \; ds \right)^{\frac{1}{2}} \\
\lesssim &  \; \|u\|_{L^p(\Omega)}^{\frac{p}{2}}\|u\|_{\dot H^1_{D, rad}(\Omega)},
	\end{align*}
	where we used the Hardy inequality in \cite{KVZ16:IMRN Riesz tf} in the last inequality. This completes the proof.
	\end{proof}

\subsection{Littlewood-Paley theory}
In this subsection, we describe the Littlewood-Paley theory adapted to the Dirichlet-Laplacian operator $-\Delta_\Omega$ by the distorted Fourier transform.  We can also refer to \cite{GermHW14:PNLS nonlinear est, KriST:NLW bup, KriST:NLS, Sch:LW-P theory} for other applications.

Fixed $\phi: [0,\infty)\rightarrow [0,1]$ a smooth non-negative function satisfying $\phi(\lambda)=1$, for $0\leqslant\lambda\leqslant 1$ and $\phi(\lambda)=0$, for $\lambda\geqslant 2$. For each dyadic number $N\in 2^\mathbb{Z}$, we define
\[\phi_N(\lambda):=\phi(\frac{\lambda}{N})\ \ \    \text{and} \ \ \ \psi_N(\lambda):=\phi_N(\lambda)-\phi_\frac{N}{2}(\lambda).\] 
Let $m(\lambda)=\psi_N(\lambda)$ in (\ref{m}), we define the Littlewood-Paley projections
\begin{align*}
P^{\Omega}_{\leq N}u : = \phi_N(\sqrt{-\Delta_{\Omega}})u , \quad  P^{\Omega}_{N}u:= \psi_N(\sqrt{-\Delta_{\Omega}})u, \quad  P^{\Omega}_{>N}u:= I - P^{\Omega}_{\leq N} u.
\end{align*}

We define the homogeneous Besov space  as the following.
\begin{definition}
	Let $s\in \mathbb{R}$ and $1\leqslant q,r\leqslant\infty$. The homogeneous Besov space $\dot{B}^s_{D,q,r}(\Omega)$ consists of the distributions $u$ supported on $\Omega$ such that
	\begin{equation*}
	\|u\|_{\dot{B}^s_{D,q,r}(\Omega)}=\left(\sum\limits_{N\in 2^\mathbb{Z}}N^{sr}\|P_N^\Omega u\|^r_{L^q(\Omega)}\right)^\frac{1}{r}<\infty.
	\end{equation*}
\end{definition}

 As in the whole space case, we have the following Bernstein inequality.
\begin{proposition}[\cite{KVZ16:IMRN Riesz tf, LiSZ:NLS}]\label{prop:berns ests}
  	For any radial function $u\in C^{\infty}_{c}(\Omega)$, we have
  \begin{align}
  \left\|P^{\Omega}_{\leq N}u \right\|_{L^{p}(\Omega)}  +  &	\left\|P^{\Omega}_{ N}u \right\|_{L^{p}(\Omega)}  \lesssim  	\left\| u \right\|_{L^{p}(\Omega)}, \label{est:LP bd1} \\
  N^s	\left\|P^{\Omega}_{N} u \right\|_{L^{p}(\Omega)}  & \approx  \left\|\big(-\Delta_{\Omega}\big)^{s/2}P^{\Omega}_{N} u \right\|_{L^{p}(\Omega)} \label{est:LP bd2}
  \end{align}
  for any $1\leq p \leq \infty$ and $s\in \R$, Moreover, we have
  \begin{align*}
  \left\|P^{\Omega}_{\leq N} u \right\|_{L^{q}(\Omega)}  + 
  \left\|P^{\Omega}_{N} u \right\|_{L^{q}(\Omega)} \lesssim N^{3\big(\frac{1}{p} -\frac{1}{q}\big)} 	\left\| u \right\|_{L^{p}(\Omega)} 
  \end{align*}
  for all $1\leq p\leq q\leq \infty$. The implicit constants depend only on $p, q$ and $s$.
\end{proposition}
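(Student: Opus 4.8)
The plan is to deduce all three inequalities from a single pointwise bound on the integral kernels of the Littlewood--Paley multipliers. For $m=\phi_{N}$ or $m=\psi_{N}$, write $m(\lambda)=m_{0}(\lambda/N)$ with $m_{0}$ a fixed even profile ($m_{0}=\phi$, resp.\ $m_{0}=\psi:=\phi(\cdot)-\phi(2\,\cdot)$), and let $\mathcal{K}^{(m)}_{N}(x,y)$ be the kernel on $\Omega\times\Omega$ of the operator $m(\sqrt{-\Delta_{\Omega}})$ on $L^{2}(\Omega)$, which is rotation invariant and so restricts to the radial calculus of the excerpt. The key step -- and the heart of the matter -- is to establish that for every $L\geq 0$,
\[
\bigl|\mathcal{K}^{(m)}_{N}(x,y)\bigr|\lesssim_{L} N^{3}\,\bigl(1+N|x-y|\bigr)^{-L},\qquad x,y\in\Omega ,
\]
with implicit constant independent of $N\in 2^{\mathbb{Z}}$; in fact the same bound holds with $m_{0}$ replaced by the rescaling of any fixed even function in $C_{c}^{\infty}(\mathbb{R})$, which is all I will need. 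Throughout one works with radial $u\in C_{c}^{\infty}(\Omega)$ as in the statement, so every operator acts via the $L^{2}$ calculus with no convergence issue (extension to the completed spaces, where needed elsewhere, being provided by Proposition \ref{prop:dense}).

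Granting this kernel bound the rest is soft. Estimate \eqref{est:LP bd1} is Schur's test, since $\sup_{x}\int_{\Omega}N^{3}(1+N|x-y|)^{-L}\,dy\lesssim 1$ uniformly in $N$. For the $L^{p}\to L^{q}$ bound I extend $u$ by zero to $\mathbb{R}^{3}$, dominate $\bigl|\mathcal{K}^{(m)}_{N}(x,y)\bigr|$ by the convolution kernel $\Phi_{N}(z):=N^{3}(1+N|z|)^{-L}$, and apply Young's inequality: $\|P^{\Omega}_{\leq N}u\|_{L^{q}(\Omega)}+\|P^{\Omega}_{N}u\|_{L^{q}(\Omega)}\lesssim\|\Phi_{N}\|_{L^{\rho}(\mathbb{R}^{3})}\|u\|_{L^{p}(\Omega)}$ with $\tfrac1\rho=1+\tfrac1q-\tfrac1p$, and $\|\Phi_{N}\|_{L^{\rho}(\mathbb{R}^{3})}\approx N^{3(1-1/\rho)}=N^{3(1/p-1/q)}$ once $L$ is large enough. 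For \eqref{est:LP bd2} I pick a fattened cut-off $\widetilde\psi\in C_{c}^{\infty}((0,\infty))$ with $\widetilde\psi\equiv 1$ on $\operatorname{supp}\psi$, set $\widetilde{P}^{\Omega}_{N}:=\widetilde\psi(\sqrt{-\Delta_{\Omega}}/N)$ so that $P^{\Omega}_{N}=\widetilde{P}^{\Omega}_{N}P^{\Omega}_{N}$, and note that $(-\Delta_{\Omega})^{\pm s/2}\widetilde{P}^{\Omega}_{N}$ has multiplier $\lambda^{\pm s}\widetilde\psi(\lambda/N)=N^{\pm s}(\lambda/N)^{\pm s}\widetilde\psi(\lambda/N)$, i.e.\ $N^{\pm s}$ times the rescaling of a fixed (even extended) $C_{c}^{\infty}$ profile supported away from the origin; the kernel bound applied to that profile gives $\|(-\Delta_{\Omega})^{\pm s/2}\widetilde{P}^{\Omega}_{N}\|_{L^{p}\to L^{p}}\lesssim N^{\pm s}$ uniformly in $N$. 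Then the factorizations $(-\Delta_{\Omega})^{s/2}P^{\Omega}_{N}=(-\Delta_{\Omega})^{s/2}\widetilde{P}^{\Omega}_{N}\cdot P^{\Omega}_{N}$ and $P^{\Omega}_{N}=(-\Delta_{\Omega})^{-s/2}\widetilde{P}^{\Omega}_{N}\cdot(-\Delta_{\Omega})^{s/2}P^{\Omega}_{N}$ give the two inequalities whose combination is \eqref{est:LP bd2}. For $1<p<\infty$ the boundedness of $P^{\Omega}_{\leq N},P^{\Omega}_{N}$ can alternatively be quoted straight from the Mikhlin theorem (Theorem \ref{thm:multiplier}), as $m_{0}(\lambda/N)$ is a Mikhlin symbol uniformly in $N$, but the kernel route also covers the endpoints $p\in\{1,\infty\}$.

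The one genuinely nontrivial point, and the main obstacle, is the kernel bound itself. I would obtain it from the Gaussian upper bound for the Dirichlet heat kernel on $\Omega$ -- which, for the exterior of a ball, is immediate from domain monotonicity, $0\leq e^{t\Delta_{\Omega}}(x,y)\leq(4\pi t)^{-3/2}e^{-|x-y|^{2}/4t}$ for $x,y\in\Omega$ -- together with the standard spectral-multiplier machinery for self-adjoint operators satisfying such bounds: using the wave representation $m(\sqrt{-\Delta_{\Omega}})=c\int_{\mathbb{R}}\widehat{m}(\tau)\cos(\tau\sqrt{-\Delta_{\Omega}})\,d\tau$ (valid since $m$ is even), finite speed of propagation for the Dirichlet wave equation on $\Omega$ localizes $\cos(\tau\sqrt{-\Delta_{\Omega}})$ to $\{|x-y|\leq|\tau|\}$, and the heat-kernel (hence Gaussian and Sobolev) bounds convert this into the rapidly decaying kernel estimate, uniformly after the rescaling $\lambda\mapsto\lambda/N$; for the profile $\phi$, which does not vanish at $0$, one first factors $\phi_{N}(\sqrt{-\Delta_{\Omega}})=e^{\Delta_{\Omega}/N^{2}}\cdot\Phi_{0}(\sqrt{-\Delta_{\Omega}}/N)$ with $\Phi_{0}(\mu)=e^{\mu^{2}}\phi(\mu)\in C_{c}^{\infty}$ and composes the two kernel bounds. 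This is exactly the line of argument carried out in \cite{KVZ16:IMRN Riesz tf} (see also \cite{LiSZ:NLS, Zhang:Heat kernel}), and it is where the geometry of the obstacle enters. For the exterior of a ball one could alternatively attempt a direct stationary-phase analysis of $\mathcal{K}^{(m)}_{N}(x,y)=\tfrac{2}{\pi}\int_{0}^{\infty}e_{\lambda}(|x|)\,e_{\lambda}(|y|)\,(\cdots)\,m(\lambda)\,d\lambda$ via the explicit eigenfunctions $e_{\lambda}(r)=\sin\lambda(r-1)/r$; the pitfall to be wary of is that one must retain the full three-dimensional structure, since the purely radial reduction discards the cancellation present in the kernel and is too lossy at the $L^{1}$ and $L^{\infty}$ endpoints.

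In short, modulo the single black box of the heat-kernel-based kernel bound, the proposition is a routine packaging of Schur's test, Young's inequality in $\mathbb{R}^{3}$, and the rescaling invariance of $C_{c}^{\infty}$ (and Mikhlin) symbols, exactly as in the Euclidean Littlewood--Paley theory.
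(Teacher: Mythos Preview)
The paper does not supply its own proof of this proposition: it is stated with citations to \cite{KVZ16:IMRN Riesz tf, LiSZ:NLS} and no argument is given in the text. Your proposal is a faithful and correct outline of the proof that appears in those references---in particular the heat-kernel/finite-propagation-speed derivation of the pointwise kernel bound $|\mathcal{K}^{(m)}_{N}(x,y)|\lesssim_{L}N^{3}(1+N|x-y|)^{-L}$ is precisely the mechanism in \cite{KVZ16:IMRN Riesz tf}, and the subsequent deductions via Schur's test, Young's inequality, and the fattened-cutoff factorization for \eqref{est:LP bd2} are the standard packaging. So there is nothing to compare: you have reconstructed the argument the paper is quoting.
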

Therefore, for any $1<p<\infty$ and any radial $u\in L^p(\Omega)$, we have  the following homogeneous decomposition
\begin{align*}
u(x) = \sum_{N\in 2^{\Z}}  P^{\Omega}_{N} \, u(x).
\end{align*}
In particular, the sums converge in $L^p(\Omega)$.
    
\subsection{Strichartz estimate} 
Now we consider the 3D Dirichlet boundary value problem of the linear wave equation with radial data
\begin{equation}\label{eq:line wave}
\begin{cases}
\partial^2_t u -\Delta u = F, & (t,x)\in\R \times\Omega, \\
u(0,x)=u_0(x),  & x\in \Omega, \\
\partial_t u(0, x)= u_1(x), & x\in \Omega, \\
u(t,x)=0, & x\in \partial \Omega, 
\end{cases}
\end{equation}
where $\Omega =\R^3\backslash \bar{B}(0, 1)$ and $u_0, u_1$ and the inhomogeneous term $F$ are radial in $x$.  By the funcional calculus, we have
\begin{equation*}
u(t,x)=\cos(t\sqrt{-\Delta_{\Omega}})u_0+\frac{\sin(t\sqrt{-\Delta_{\Omega}})}{\sqrt{-\Delta_{\Omega}}} u_1 + \int^{t}_{0} \frac{\sin((t-s)\sqrt{-\Delta_{\Omega}})}{\sqrt{-\Delta_{\Omega}}} F(s, x)\, ds.
\end{equation*}

Let us denote the half-wave operator as 
$
U(t) = e^{it\sqrt{-\Delta_{\Omega}}},
$
then 
\begin{align*}
\cos(t\sqrt{-\Delta_{\Omega}})u_0 = 
\frac{U(t)+U(-t)}{2} u_0, \quad 
\frac{\sin(t\sqrt{-\Delta_{\Omega}})}{\sqrt{-\Delta_{\Omega}}} u_1 = \frac{U(t)-U(-t)}{2i\sqrt{-\Delta_{\Omega}}} u_1.
\end{align*}

By the distorted Fourier transform and the stationary phase estimate in \cite{XuYang:NLW}, we can obtain the following uniform dispersive estimate for linear wave equation \eqref{eq:line wave}, 
\begin{proposition}[Dispersive estimate. \cite{XuYang:NLW}]\label{prop:unif disp est}
	Let $2 \leq r \leq \infty$,  and the  radial function $f$ is supported on $\Omega$, then
	\begin{align*}
	\big\| U(t)  f \big\|_{\dot B^{-\beta(r)}_{D, r,2} (\Omega)} \lesssim |t|^{-\gamma(r)}  	\big\| f \big\|_{\dot B^{\beta(r)}_{D, r',2} (\Omega)} 
	\end{align*}
	where $\beta(r)=\gamma(r)=1- \frac2r$.
\end{proposition}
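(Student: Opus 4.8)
The plan is to deduce the Besov-space dispersive estimate from a single scalar, frequency-localized $L^{1}\to L^{\infty}$ bound, and then reassemble. Since $U(t)$ and the Littlewood--Paley projections $P^{\Omega}_{N}$ are both functions of $\sqrt{-\Delta_{\Omega}}$ they commute, so it is enough to prove
\[
\big\| P^{\Omega}_{N} U(t) g \big\|_{L^{\infty}(\Omega)} \lesssim |t|^{-1}\, N^{2}\, \|g\|_{L^{1}(\Omega)}
\]
uniformly in dyadic $N$ and in $t\neq 0$ (we may take $t>0$, the case $t<0$ following by complex conjugation). Interpolating this with the trivial bound $\|P^{\Omega}_{N}U(t)g\|_{L^{2}}\le\|g\|_{L^{2}}$ (unitarity of $U(t)$ on $L^{2}$ together with $L^{2}$-boundedness of $P^{\Omega}_{N}$) by Riesz--Thorin with parameter $\theta=\tfrac{2}{r}$ gives
\[
\big\| P^{\Omega}_{N} U(t) g \big\|_{L^{r}(\Omega)} \lesssim |t|^{-\gamma(r)}\, N^{2\beta(r)}\, \|g\|_{L^{r'}(\Omega)},
\qquad \gamma(r)=\beta(r)=1-\tfrac{2}{r}.
\]
Applying this with $g=\widetilde P^{\Omega}_{N}f$, where $\widetilde P^{\Omega}_{N}$ is a fattened projector with symbol $1$ on $\mathrm{supp}\,\psi_{N}$ (so that $P^{\Omega}_{N}U(t)f=P^{\Omega}_{N}U(t)\widetilde P^{\Omega}_{N}f$), then squaring, multiplying by $N^{-2\beta(r)}$ and summing over $N\in 2^{\Z}$ via the definition of the Besov norm and the bounded overlap of the $\widetilde P^{\Omega}_{N}$, produces exactly
\[
\| U(t) f \|_{\dot B^{-\beta(r)}_{D,r,2}(\Omega)} \lesssim |t|^{-\gamma(r)}\, \| f \|_{\dot B^{\beta(r)}_{D,r',2}(\Omega)} .
\]
The endpoints $r=2$ (immediate, since $\dot B^{0}_{D,2,2}=L^{2}$ and $U(t)$ is unitary) and $r=\infty$ are covered by the same scheme.

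\textbf{The frequency-localized kernel.} For the $L^{1}\to L^{\infty}$ bound I would use the explicit kernel afforded by the distorted Fourier transform. By the functional calculus and the explicit generalized eigenfunctions $e_{\lambda}(r)=\sin\lambda(r-1)/r$, the operator $P^{\Omega}_{N}U(t)$ has radial kernel
\[
K_{N}(t,r,s)=\frac{2}{\pi}\int_{0}^{\infty} e^{it\lambda}\,\psi_{N}(\lambda)\,\frac{\sin\lambda(r-1)}{r}\,\frac{\sin\lambda(s-1)}{s}\;d\lambda ,
\]
and, since $\|g\|_{L^{1}(\Omega)}=4\pi\int_{1}^{\infty}|g(s)|\,s^{2}\,ds$ for radial $g$, the estimate reduces to $\sup_{r,s\geq 1}|K_{N}(t,r,s)|\lesssim |t|^{-1}N^{2}$. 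With $\rho=r-1$, $\sigma=s-1$ and $2\sin\lambda\rho\,\sin\lambda\sigma=\cos\lambda(\rho-\sigma)-\cos\lambda(\rho+\sigma)$, expanding the cosines into exponentials writes $K_{N}$ as $\tfrac{c}{rs}$ times a signed sum of four terms $g_{N}(\mu_{j})$, where $g_{N}(\mu)=\int_{0}^{\infty}e^{i\mu\lambda}\psi_{N}(\lambda)\,d\lambda$ and $\mu_{j}\in\{\,t\pm(\rho-\sigma),\ t\pm(\rho+\sigma)\,\}$. The substitution $\lambda=N\eta$ identifies $g_{N}(\mu)$ with $N$ times a fixed Schwartz function evaluated at $N\mu$, so $|g_{N}(\mu)|\lesssim N(1+N|\mu|)^{-M}$ and $|g_{N}'(\mu)|\lesssim N^{2}(1+N|\mu|)^{-M}$ for every $M$.

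\textbf{The main obstacle.} The heart of the matter is extracting the decay $|t|^{-1}$ uniformly in $N$, including down to arbitrarily low frequencies and near the sphere, and this is exactly where the Dirichlet condition must be used. The four phases $\mu_{j}$ split into two pairs whose members differ by $2\min(\rho,\sigma)$ (using the $\rho\leftrightarrow\sigma$ symmetry of $K_{N}$ to select the smaller shift); combining the two members of each pair through the fundamental theorem of calculus applied to $g_{N}$ upgrades the crude bound $|K_{N}|\lesssim \tfrac{N}{rs}$ to the reflected-wave estimate
\[
|K_{N}(t,r,s)| \lesssim \frac{\min(\rho,\sigma)}{rs}\,N^{2}\,\Big[(1+N|\mu_{+}|)^{-M}+(1+N|\mu_{-}|)^{-M}\Big]
\]
for suitable representative phases $\mu_{\pm}$; this is the quantitative form of the cancellation created by the vanishing of $e_{\lambda}$ at $r=1$, and cannot be obtained by merely bounding the eigenfunctions in $L^{\infty}$. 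Since $\tfrac{\min(\rho,\sigma)}{rs}\le \tfrac{1}{\max(r,s)}$, one closes the estimate by a short case analysis according to whether $t$ is close to a light cone $t=|\rho-\sigma|$ (direct propagation) or $t=\rho+\sigma$ (one reflection off the sphere) or far from all of them: on a cone one has $\max(r,s)\gtrsim|t|$, while away from the cones the factors $(1+N|\mu_{j}|)^{-M}$ already beat $|t|^{-1}N^{2}$, and for $|t|\lesssim N$ the crude bound $|K_{N}|\lesssim N$ suffices. This stationary-phase bookkeeping, which must reconcile the obstacle scale $1$, the frequency scale $N^{-1}$ and the time $|t|$ simultaneously, is the main technical obstacle; the rest of the argument is soft, and I would organize the oscillatory-integral part as in \cite{XuYang:NLW}, isolating first the one-dimensional estimates for $g_{N}$ and $g_{N}'$ and then feeding them the geometry of $\Omega$.
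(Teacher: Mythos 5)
The paper itself does not prove Proposition \ref{prop:unif disp est}: it is imported verbatim from \cite{XuYang:NLW}, with only the remark that it follows ``by the distorted Fourier transform and the stationary phase estimate'' there. Your architecture --- a frequency-localized $L^1\to L^\infty$ kernel bound built from the explicit eigenfunctions $e_\lambda(r)=\sin\lambda(r-1)/r$, Riesz--Thorin interpolation against the $L^2$ unitarity bound, and $\ell^2$ reassembly into the Besov norms --- is exactly that indicated route (equivalently, the reduction of the radial exterior problem to the half-line via $v=ru$ and odd reflection: your four phases $t\pm(\rho-\sigma)$, $t\pm(\rho+\sigma)$ are the direct and reflected characteristics). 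The reduction to $\sup_{r,s\ge 1}|K_N(t,r,s)|\lesssim |t|^{-1}N^2$ and the use of the vanishing of $e_\lambda$ at $r=1$ through pairing of phases are the right mechanisms.

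There is, however, one step that is wrong as written: the claim that ``away from the cones the factors $(1+N|\mu_j|)^{-M}$ already beat $|t|^{-1}N^2$.'' In the off-cone regime $\rho+\sigma\le t/2$ all four phases are comparable to $t$, so rapid decay in $N|\mu_j|$ produces at best powers of $(N|t|)^{-1}$; a bound of the form $N^a(N|t|)^{-M}\le N^2|t|^{-1}$ uniformly as $N\to 0$ forces $a\ge M+2$, whereas your single pairing only supplies the prefactor $\min(\rho,\sigma)N^2/(rs)\le N^2$, i.e.\ $a=2$. Concretely, for $N|t|\approx 1$ with $|t|$ large your bound is $\approx N^2$, while the target is $N^2|t|^{-1}\ll N^2$. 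The fix is to perform the fundamental-theorem-of-calculus gain in \emph{both} variables off the cones, obtaining the prefactor $\rho\sigma N^3/(rs)\le N^3$ (the analogue of the free-space crude bound $N^3$); then $N^3(1+N|t|)^{-1}\lesssim N^2|t|^{-1}$ holds uniformly in $N$ and $t$. Keeping your single pairing together with $\max(r,s)\gtrsim |t|$ near the cones, and the crude bound for $|t|\lesssim N^{-1}$, the scheme then closes; since \cite{XuYang:NLW} is not reproduced in this paper I cannot check your sketch line-by-line against the source, but this low-frequency off-cone case is the one genuine gap in what you wrote.
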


By the dual argument in \cite{GinV:NLW:St est}, and Christ-Kiselev lemma in \cite{ChrstK:lem}, see also \cite{KeelTao:Endpoint}, we have 
 the following global-in-time Strichartz estimate.

\begin{proposition}[Strichartz estimate, \cite{XuYang:NLW}]\label{prop: ST est} Let $\rho_1, \rho_2, \mu\in \mathbb{R}$ and $2\leq q_1, q_2, r_1, r_2\leq \infty$ and let the following conditions be satisfied
	\begin{align*}
	0\,\leq \, \frac{1}{q_i} +\frac{1}{r_i} \leq \,\frac12, \quad r_i \not = \infty, \quad  i=1,\, 2
	\end{align*}
	\begin{align*}
	\rho_1 +3\left(\frac12 -\frac{1}{r_1}\right)-\frac{1}{q_1} = \mu, \quad 
	\rho_2  +3\left(\frac12 -\frac{1}{r_2}\right)-\frac{1}{q_2} = 1-\mu 
	\end{align*} 
Let $u_0, u_1$ and $F$ be radial in $x$, and $u: I\times \Omega\longrightarrow \mathbb{R}$ be a solution to linear wave equation \eqref{eq:line wave} with $0\in I$. Then $u$ satisfies the estimate
	\begin{align*}
\left\|u\right\|_{L^{q_1}_I\dot B^{\rho_1}_{D, r_1, 2}(\Omega)\cap C(I; \dot H_D^{\mu}(\Omega))} + 	\left\|\partial_t u\right\|&_{L^{q_1}_I\dot B^{\rho_1-1}_{D, r_1, 2}(\Omega) \cap C(I; \dot H_D^{\mu-1}(\Omega)) }   \\
& \lesssim \; \|(u_0, u_1)\|_{\dot H^{\mu}_{D, rad}(\Omega)\times \dot H^{\mu-1}_{D,rad}(\Omega)} + \|F\|_{L^{q'_2}_I\dot B^{-\rho_2}_{D, r'_2, 2}(\Omega)}.
\end{align*}
\end{proposition}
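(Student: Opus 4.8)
The plan is to implement the high-low frequency decomposition of Bourgain adapted to the exterior domain, following the scheme of \cite{GallPlan03:NLW} and \cite{XuYang:NLW}, with the radial Sobolev inequality (Proposition \ref{prop:rad sob est}) inserted exactly where the super-conformal power $3<p<5$ causes the energy-type quantities to be harder to close than in the cubic case. First I would reduce matters to an a priori bound: by the local well-posedness theory (Theorem \ref{thm:nlu lwp}) it suffices to show that on any finite interval $[0,T)$ the $\dot H^s_D(\Omega)$-norm of the solution cannot blow up, with the quantitative growth stated. Fix a large frequency parameter $N = N(T)$ to be optimized at the end. Split the data $u_0 = P^{\Omega}_{\le N}u_0 + P^{\Omega}_{>N}u_0 =: v_0 + w_0$ and similarly $u_1 = v_1 + w_1$. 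The low-frequency part $(v_0,v_1)$ lives in the energy space: using Bernstein (Proposition \ref{prop:berns ests}) one gets $\|v_0\|_{\dot H^1_D} \lesssim N^{1-s}\|u_0\|_{\dot H^s_D}$, and the $L^{p+1}$ norm is controlled since $(u_0,u_1)\in L^{p+1}(\Omega)$; the high-frequency part $(w_0,w_1)$ is small in $\dot H^s_D\times\dot H^{s-1}_D$ and, crucially, decays in a scaling-subcritical sense. I then let $w(t)$ solve the linear (free) wave equation with data $(w_0,w_1)$ and write the nonlinear solution as $u = w + v$, so that $v$ solves the difference equation
\begin{equation*}
\partial_t^2 v - \Delta v + |w+v|^{p-1}(w+v) = 0, \qquad (v,\partial_t v)(0) = (v_0,v_1),
\end{equation*}
with Dirichlet boundary conditions.

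The heart of the argument is to propagate control on the energy $\mathcal{E}(t) := \tfrac12\|\partial_t v(t)\|_{L^2}^2 + \tfrac12\|\nabla v(t)\|_{L^2}^2 + \tfrac1{p+1}\|v(t)\|_{L^{p+1}}^{p+1}$ together with $\|v(t)\|_{L^2}^2$ (the latter is needed because the difference-equation nonlinearity is not purely $|v|^{p-1}v$, so mass is not exactly conserved and must be estimated). Differentiating $\mathcal{E}$ in time and using the equation for $v$, the boundary term vanishes by the Dirichlet condition, and one is left with an error term of the form $\int_\Omega \partial_t v \cdot \big(|w+v|^{p-1}(w+v) - |v|^{p-1}v\big)\,dx$. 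The pointwise bound $\big||w+v|^{p-1}(w+v) - |v|^{p-1}v\big| \lesssim |w|^p + |w|\,|v|^{p-1}$ (valid since $p>1$) reduces the estimate to controlling $\int |\partial_t v|(|w|^p + |w||v|^{p-1})$. Here I would Hölder-split: the term with $|w|^p$ is handled by placing $w$ in a global-in-time Strichartz space via Proposition \ref{prop: ST est} with a suitable admissible exponent pair and $\partial_t v$ in $L^2$ (paying a power of $\mathcal{E}$), while the mixed term $\int|\partial_t v||w||v|^{p-1}$ is where the super-conformal power bites: in the cubic case $p=3$ one has $|v|^{p-1}=|v|^2$ which is directly controlled by $\|v\|_{L^{p+1}}^{p+1}=\|v\|_{L^4}^4$ via Hölder, but for $3<p<5$ the exponent $|v|^{p-1}$ cannot be absorbed into $\|v\|_{L^{p+1}}^{p+1}$ and $\|\nabla v\|_{L^2}^2$ alone without loss. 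This is precisely where I invoke the radial Sobolev inequality: using $r^{4/(p+1)}|v(r)| \lesssim \|v\|_{L^{p+1}}^{(p-1)/(p+1)}\|v\|_{\dot H^1_{D,rad}}^{2/(p+1)}$ (the $p\mapsto p+1$ instance of Proposition \ref{prop:rad sob est}) one trades part of $|v|^{p-1}$ for a decaying weight $r^{-\alpha}$ and fractional powers of $\mathcal{E}$, then combines this with the pointwise radial decay of the free evolution $w$ (again via Proposition \ref{prop:rad sob est} applied to $w$, or via dispersive/Strichartz control of $w$) to obtain a bound of the form $\tfrac{d}{dt}\mathcal{E}(t) \lesssim N^{-\theta}\,(1+\mathcal{E}(t))^{\kappa}$ for suitable $\theta,\kappa>0$ depending on $p,s,s_c$. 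A parallel but simpler differential inequality holds for $\|v(t)\|_{L^2}^2$.

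From the differential inequality a continuity/bootstrap argument gives $\mathcal{E}(t) \lesssim \mathcal{E}(0) + 1 \lesssim N^{2(1-s)}$ uniformly on $[0,T)$ provided $N^{-\theta}T$ times the relevant powers stays below a fixed threshold, i.e. provided $N$ is chosen as an appropriate positive power of $T$; then $\|u(t)\|_{\dot H^s_D} \le \|w(t)\|_{\dot H^s_D} + \|v(t)\|_{\dot H^s_D} \lesssim \|u_0\|_{\dot H^s_D} + \|v(t)\|_{\dot H^1_D} \lesssim N^{1-s}$, and substituting the chosen power $N = N(T)$ yields the stated exponent $2(1-s) + \frac{[7p-3-(6p-6)s](1-s)}{2[4(2p-3)s-4(2p-3)+(p+3)(1-s_c)]}$ after bookkeeping; the hypothesis $1-\frac{(p+3)(1-s_c)}{4(2p-3)} < s < 1$ is exactly the condition making the denominator positive so that the optimization is admissible and $\theta>0$. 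The main obstacle I anticipate is the mixed-term estimate $\int|\partial_t v|\,|w|\,|v|^{p-1}\,dx$ for $3<p<5$: one must balance (i) the radial Sobolev weight on $v$, (ii) the decay and Strichartz regularity available for $w$, and (iii) the powers of $\mathcal{E}(t)$ produced, so that the resulting Grönwall-type inequality integrates to only polynomial growth in $T$ with the sharp exponent — getting the exponents to match the claimed formula, rather than merely proving \emph{some} polynomial bound, will require careful tracking of every Hölder split and every application of Bernstein and the Hardy inequality. A secondary technical point is justifying all manipulations (integration by parts, vanishing boundary terms, density of $C_c^\infty(\Omega)$) at the regularity $s<1$, which is handled by the density statements in Proposition \ref{prop:dense} and the equivalence $\dot H^s_D(\Omega)\cong \dot H^s_{00}(\Omega)$ together with the local theory.
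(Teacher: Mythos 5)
Your proposal does not prove the statement in question. Proposition \ref{prop: ST est} is a \emph{linear} estimate: a global-in-time Strichartz inequality for solutions of the linear Dirichlet wave equation \eqref{eq:line wave} outside the ball, with a general admissible pair $(q_1,r_1,\rho_1)$ on the left and a dual admissible pair $(q_2,r_2,\rho_2)$ on the inhomogeneous term. What you have written is instead a proof sketch of the nonlinear global well-posedness result, Theorem \ref{thm:GWP}: the high-low frequency truncation of the data, the difference equation for $v$, the energy differential inequality with the radial Sobolev inequality handling the super-conformal power, and the optimization of $N(T)$ leading to the exponent $2(1-s)+\frac{[7p-3-(6p-6)s](1-s)}{2[\cdots]}$. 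None of that bears on the linear proposition. Worse, your argument is circular with respect to the statement you were asked to prove: you explicitly invoke ``Proposition \ref{prop: ST est} with a suitable admissible exponent pair'' to control $w$ in a Strichartz space, i.e.\ you assume the very estimate under consideration.

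The actual proof (which the paper attributes to \cite{XuYang:NLW} and only outlines) runs along entirely different, purely linear lines. One starts from the uniform dispersive estimate of Proposition \ref{prop:unif disp est}, $\|U(t)f\|_{\dot B^{-\beta(r)}_{D,r,2}(\Omega)}\lesssim |t|^{-\gamma(r)}\|f\|_{\dot B^{\beta(r)}_{D,r',2}(\Omega)}$ with $\beta(r)=\gamma(r)=1-\tfrac2r$, itself obtained from the distorted Fourier transform and stationary phase. Interpolating this with the $L^2$ isometry of $U(t)$ and running the $TT^*$ argument together with the Hardy--Littlewood--Sobolev inequality in time (Keel--Tao \cite{KeelTao:Endpoint} at the endpoint) yields the homogeneous estimates for the data $(u_0,u_1)$ in the stated Besov norms under the admissibility and gap conditions $\rho_i+3(\tfrac12-\tfrac1{r_i})-\tfrac1{q_i}=\mu$ resp.\ $1-\mu$. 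The inhomogeneous estimate with two \emph{different} admissible pairs then follows by duality (the Ginibre--Velo argument \cite{GinV:NLW:St est}) applied to the untruncated operator $\int_{\mathbb{R}} U(t-s)F(s)\,ds$, and the Christ--Kiselev lemma \cite{ChrstK:lem} transfers the bound to the retarded Duhamel integral $\int_0^t$ since $q_1>q_2'$. If you want to supply a proof of this proposition, that is the route to take; the nonlinear machinery you describe belongs to Section \ref{sect:pf rest}, not here.
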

For the global-in-time Strichartz estimate in the nonradial case, we can also refer to \cite{DuyYang24:LW Str}.

By the refined (localized) dispersive estimate instead of  the uniform dispersive estimate in Proposition \ref{prop:unif disp est}, we can obtain the improved global-in-time endpoint Strichartz estimates.
\begin{proposition}[Endpoint Strichartz estimate with radial data, \cite{XuYang:NLW}]\label{prop: end ST est} Let $u_0, u_1$ and $F$ be radial in $x$ variable, and $u: I\times \Omega\longrightarrow \mathbb{R}$ be  a solution to linear wave equation \eqref{eq:line wave} with $0\in I$. If $q>4$ and $s  = 1-3/q, $
	then we have
	\begin{align} \label{ese}
	\left\|u\right\|_{L^2_tL^q_x(I\times \Omega)}  \lesssim \|(u_0,u_1)\|_{\dot H^s_{D, rad}(\Omega)\times \dot H^{s-1}_D(\Omega)} + \|F\|_{L^1_t \dot H^{s-1}_{D, rad}(I\times\Omega)}. 
	\end{align}
\end{proposition}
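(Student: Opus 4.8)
The plan is to reduce \eqref{ese} to a single frequency-localized estimate for the half-wave propagator $U(t)=e^{it\sqrt{-\Delta_\Omega}}$ and to obtain the inhomogeneous term for free. Writing the solution of \eqref{eq:line wave} as $u(t)=\cos(t\sqrt{-\Delta_\Omega})u_0+\frac{\sin(t\sqrt{-\Delta_\Omega})}{\sqrt{-\Delta_\Omega}}u_1+\int_0^t\frac{\sin((t-\sigma)\sqrt{-\Delta_\Omega})}{\sqrt{-\Delta_\Omega}}F(\sigma)\,d\sigma$ and using $\cos(t\sqrt{-\Delta_\Omega})=\tfrac12(U(t)+U(-t))$ together with $\frac{\sin(t\sqrt{-\Delta_\Omega})}{\sqrt{-\Delta_\Omega}}=\frac{U(t)-U(-t)}{2i}(-\Delta_\Omega)^{-1/2}$, it is enough to prove the homogeneous bound $\|U(t)f\|_{L^2_tL^q_x(\R\times\Omega)}\lesssim\|f\|_{\dot H^{s}_{D,rad}(\Omega)}$ for radial $f$, with $s=1-\tfrac3q$ and $q>4$ (over all of $\R$; restriction to $I$ is then automatic). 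Indeed the $u_0$-term is this bound directly, the $u_1$-term is this bound applied to $(-\Delta_\Omega)^{-1/2}u_1$, whose $\dot H^{s}_{D,rad}$-norm is $\|u_1\|_{\dot H^{s-1}_{D,rad}}$, and the Duhamel term is controlled by Minkowski's integral inequality in $\sigma$, $\big\|\int_0^t(\cdots)F(\sigma)\,d\sigma\big\|_{L^2_tL^q_x}\le\int_I\big\|\mathbf 1_{\{\sigma\ \mathrm{between}\ 0,t\}}\tfrac{\sin((t-\sigma)\sqrt{-\Delta_\Omega})}{\sqrt{-\Delta_\Omega}}F(\sigma)\big\|_{L^2_tL^q_x}\,d\sigma$, whose integrand, by time-translation invariance and the homogeneous bound applied with data $(0,F(\sigma))$, is $\lesssim\|F(\sigma)\|_{\dot H^{s-1}_{D,rad}}$; integrating in $\sigma$ over $I$ gives the $\|F\|_{L^1_t\dot H^{s-1}_{D,rad}(I\times\Omega)}$ term. (Because the source is measured in $L^1_t$, the Christ--Kiselev lemma of \cite{ChrstK:lem} is not needed here, unlike in the dual derivation of Proposition~\ref{prop: ST est}.)

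For the homogeneous bound I would decompose $f=\sum_{N\in2^\Z}P^{\Omega}_{N}f$. Since $q\ge2$, the square-function estimates for $-\Delta_\Omega$ give the Besov embedding $\dot B^0_{D,q,2}(\Omega)\hookrightarrow L^q(\Omega)$, hence $\|U(t)f\|_{L^q_x}\lesssim\big(\sum_N\|U(t)P^{\Omega}_{N}f\|_{L^q_x}^2\big)^{1/2}$ and therefore $\|U(t)f\|_{L^2_tL^q_x}^2\lesssim\sum_N\|U(t)P^{\Omega}_{N}f\|_{L^2_tL^q_x}^2$; since also $\sum_NN^{2s}\|P^{\Omega}_{N}f\|_{L^2(\Omega)}^2\approx\|f\|_{\dot H^s_D(\Omega)}^2$, it suffices to prove the single-frequency estimate $\|U(t)P^{\Omega}_{N}f\|_{L^2_tL^q_x(\R\times\Omega)}\lesssim N^s\|P^{\Omega}_{N}f\|_{L^2(\Omega)}$ with an implicit constant independent of $N\in2^\Z$ — the uniformity in $N$ being a genuine point, because $\Omega$ admits no exact dilation symmetry and one cannot rescale to $N=1$.

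The single-frequency estimate is the crux, and is where the refined localized dispersive estimate enters. Through the distorted Fourier transform one has $U(t)P^{\Omega}_{N}f(r)=\int_1^\infty K_N(t;r,s)f(s)s^2\,ds$ with $K_N(t;r,s)=\tfrac2\pi\int_0^\infty e_\lambda(r)e_\lambda(s)\psi_N(\lambda)e^{it\lambda}\,d\lambda$; since $e_\lambda(r)=\sin\lambda(r-1)/r$, expanding the product of sines and carrying out the (non-)stationary phase analysis of \cite{XuYang:NLW} shows that, up to rapidly decaying tails, $|K_N(t;r,s)|\lesssim\tfrac{N}{rs}\langle N(|t|-|r-s|)\rangle^{-M}+\tfrac{N}{rs}\langle N(|t|-(r+s-2))\rangle^{-M}$, i.e. $K_N$ is concentrated within $\mathcal O(N^{-1})$ of the direct and once-reflected light cones and carries the decisive weight $(rs)^{-1}$; this is the improvement over the uniform estimate in Proposition~\ref{prop:unif disp est}. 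From this one gets the pointwise bound $|U(t)P^{\Omega}_{N}f(r)|\lesssim\tfrac1r\big(\tilde g(r-|t|)+\tilde g(r+|t|-2)\big)$ with $g(s)=s\,f(s)$ and $\tilde g=g\ast\big(N\langle N\cdot\rangle^{-M}\big)$, so that $\|\tilde g\|_{L^p(\R)}\lesssim\|g\|_{L^p(\R)}$. The single-frequency estimate then follows by computing $\big\|\tfrac1r\tilde g(r\mp|t|)\big\|_{L^2_tL^q_x(\R\times\Omega)}$ directly: integrating in $t$ first and then in the weighted variable $r$, one meets an integral of the form $\int_1^\infty r^{2-q}(\cdots)\,dr$, whose convergence at $r=\infty$ — combined with the moving-cone change of variables $\rho=r\mp|t|$ — is exactly the restriction $q>4$, while tracking the amplitude and $\mathcal O(N^{-1})$-width of the frequency-$N$ profile $g$ against the boundary constraint $r\gtrsim\max(1,N^{-1})$ produces precisely the gain $N^s$. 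Summing over $N$ as above closes \eqref{ese}. I expect the far-field, large-time part of this last computation — turning the light-cone kernel bound into a finite \emph{global}-in-time $L^2_t$ quantity, which is exactly where $q>4$ is forced and where mere use of the generic $|t|^{-(1-2/q)}$ wave decay would fail — to be the main obstacle; the reduction to the homogeneous bound, the frequency summation, and the near-field/short-time part (handled by $\|U(\tau)\|_{L^2\to L^2}=1$ and the Bernstein inequalities of Proposition~\ref{prop:berns ests}, which supply exactly $N^{3(1-2/q)-1}=N^{2s}$ from an interval of length $\sim N^{-1}$) are comparatively routine.
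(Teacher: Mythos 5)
Your proposal is correct in outline and follows essentially the route the paper indicates: this proposition is quoted from \cite{XuYang:NLW} rather than proved here, and the stated mechanism there -- a refined, frequency-localized dispersive (kernel) estimate obtained from the distorted Fourier transform and stationary phase, exploiting the $(rs)^{-1}$ weight of $e_\lambda(r)=\sin\lambda(r-1)/r$ and the concentration near the direct and reflected light cones, combined with Littlewood--Paley square-function summation -- is exactly the skeleton you describe. One small imprecision: the restriction $q>4$ does not come from convergence of $\int_1^\infty r^{2-q}\,dr$ alone (that only needs $q>3$); it is forced by the subsequent $L^2_t$ integration of $\|u(t)\|_{L^q_x}\sim\langle t\rangle^{2/q-1}$ along the unit-width moving cone, i.e.\ $\int\langle t\rangle^{2(2/q-1)}\,dt<\infty\Leftrightarrow q>4$, which is consistent with, but sharper than, your phrasing.
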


\section{Proof of Theorem \ref{thm:GWP}} \label{sect:pf rest}
In this section, we will combine the Fourier truncation method in \cite{Bourgain98:FTM},  energy method, the endpoint Strichartz estimates with radial Sobolev inequality in Section \ref{sect:DFT and ST est}  to prove Theorem \ref{thm:GWP}. 

Let $s_c=\frac{3}{2}-\frac{2}{p-1}<s<1$ and  $(u_0,u_1)\in (\dot{H}^s_{D, rad}(\Omega)\cap L^{p+1}(\Omega))\times \dot{H}^{s-1}_{D, rad}(\Omega).$

Now we decompose initial data of \eqref{eq:pnlw} as the following \[u_i=w_i+v_i,\]
where $i=0,1$ and $w_i=P^{\Omega}_{\geqslant 2^J}u_i, v_i=P^{\Omega}_{<2^J}u_i$, $J$ is chosen sufficiently large later. 

\subsection{Global well-posedness of \eqref{eq:pnlw} with high frequency data}\label{subsect:w}
Consider the equation 
\begin{equation}\label{eq:hfreqp}
    \partial_t^2 w-\Delta w+|w|^{p-1}w=0
\end{equation}
with intial data $(w_0,w_1)\in \dot{H}^{s}_D(\Omega)\times \dot{H}^{s-1}_D(\Omega)$ and the zero Dirichlet boundary value $w(t,x)=0$, $x\in \partial\Omega$. For an arbitrary small number $\varepsilon>0$, if we choose a constant $J=J(\varepsilon)>0$ sufficiently large such that $$\|(w_0,w_1)\|_{\dot{H}^s_{D,rad}(\Omega)\times\dot{H}^{s-1}_{D,rad}(\Omega)}\lesssim \varepsilon.$$
By the Bernstein estimate in Proposition \ref{prop:berns ests},  we have \begin{align*}\|(w_0,w_1)\|_{\dot{H}^{s_c}_{D, rad}(\Omega)\times\dot{H}^{s_c-1}_{D, rad}(\Omega)}\lesssim 2^{J(s_c-s)}\|(u_0,u_1)\|_{\dot{H}^s_{D, rad}(\Omega)\times\dot{H}^{s-1}_{D, rad}(\Omega)}\lesssim 2^{J(s_c-s)}.
\end{align*}
Using the standard well-posedness theory in \cite{Sogge95:book, XuYang:NLW}, we can obtain the following global result.

\begin{proposition}\label{prop:gwp hp}
    Let $s_c=\frac{3}{2}-\frac{2}{p-1}<s\leqslant 1$, there exists a small number $\varepsilon_0>0$ and a large constant $J=J(\varepsilon)>0$ such that $$2^{J(s_c-s)}  \lesssim \varepsilon_0,$$
    then the equation \eqref{eq:hfreqp} is global well-posedness in $\dot{H}^{s_c}_{D, rad}(\Omega)\cap \dot{H}^s_{D, rad}(\Omega)$. Moreover, we have the following estimates
    \begin{equation}\label{w:Hs est}
        \|w\|_{L^\infty_t\left(\mathbb{R};\dot{H}^s_{D, rad}(\Omega)\right)}\lesssim \varepsilon,
    \end{equation}
    \begin{equation}\label{w:Hsc est}
        \|w\|_{L_t^{\frac{2p}{1+s_c}}L_x^{\frac{2p}{2-s_c}}(\mathbb{R}\times \Omega)}+\|w\|_{L_{t,x}^{2(p-1)}(\mathbb{R}\times \Omega)}+\|w\|_{L^{p-1}_t L^{3(p-1)}_x(\mathbb{R}\times\Omega)}+\|w\|_{L^2_t L^\frac{3}{1-s_c}_x(\mathbb{R}\times\Omega)}\lesssim 2^{J(s_c-s)}.
    \end{equation}
\end{proposition}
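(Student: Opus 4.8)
The plan is to treat \eqref{eq:hfreqp} as a small-data scattering problem at the scaling-critical regularity $\dot H^{s_c}_D(\Omega)$, and then upgrade the solution to carry the extra regularity $\dot H^s_D(\Omega)$ via persistence of regularity. Since $w_i = P^{\Omega}_{\geq 2^J} u_i$, the Bernstein inequality in Proposition \ref{prop:berns ests} gives $\|(w_0,w_1)\|_{\dot H^{s_c}_{D,rad}\times\dot H^{s_c-1}_{D,rad}} \lesssim 2^{J(s_c-s)}\|(u_0,u_1)\|_{\dot H^s_{D,rad}\times\dot H^{s-1}_{D,rad}} \lesssim 2^{J(s_c-s)}$, which is $\lesssim \varepsilon_0$ once $J$ is large; so the critical norm of the data is as small as we wish. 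First I would set up the Duhamel iteration for \eqref{eq:hfreqp} in a space built from the critical Strichartz norms dictated by Proposition \ref{prop: ST est} with $\mu = s_c$, and close the fixed point using the fractional chain rule (Corollary \ref{cor:fcr}) to estimate $\|(-\Delta_\Omega)^{(s_c-1)/2}(|w|^{p-1}w)\|$ by $\|w\|_{L^{p_1}}^{p-1}\|(-\Delta_\Omega)^{(s_c-1)/2}w\|_{L^{p_2}}$ for suitable H\"older triples; the defocusing sign plays no role here, only the subcritical range $3<p<5$ does, which keeps all exponents admissible. The relevant critical-norm bound then yields exactly the first factor $\|w\|_{L^{2p/(1+s_c)}_t L^{2p/(2-s_c)}_x}$ in \eqref{w:Hsc est}, since that pair $(q,r)=(2p/(1+s_c), 2p/(2-s_c))$ is $\dot H^{s_c}$-admissible for the wave equation in dimension $3$.

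Next I would derive the remaining three norms in \eqref{w:Hsc est}. The norms $\|w\|_{L^{2(p-1)}_{t,x}}$, $\|w\|_{L^{p-1}_t L^{3(p-1)}_x}$, and $\|w\|_{L^2_t L^{3/(1-s_c)}_x}$ should all follow from the same Strichartz machinery: each of these pairs, after checking the gap condition $\rho+3(\tfrac12-\tfrac1r)-\tfrac1q = s_c$ with $\rho\ge 0$ (equivalently, that the pair lies on or below the $\dot H^{s_c}$ line), is controlled by the critical data norm plus the Duhamel term, which is already closed. In particular $\|w\|_{L^2_t L^{3/(1-s_c)}_x}$ is the endpoint norm of Proposition \ref{prop: end ST est} with $q = 3/(1-s_c) > 4$ (valid since $s_c > 1/4$ for $p>3$) and $s = 1-3/q = s_c$, so it is bounded by $\|(w_0,w_1)\|_{\dot H^{s_c}_{D,rad}\times\dot H^{s_c-1}_D} + \|{|w|^{p-1}w}\|_{L^1_t\dot H^{s_c-1}_{D,rad}} \lesssim 2^{J(s_c-s)}$ once the nonlinear term is estimated by the already-controlled norms. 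The bookkeeping of which H\"older exponents feed which Strichartz norm is the most laborious part, but it is entirely standard subcritical wave theory.

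For \eqref{w:Hs est} I would run persistence of regularity: apply Proposition \ref{prop: ST est} again, this time with $\mu = s$, to the same solution on a time interval where the critical Strichartz norms are small (which by the above is all of $\mathbb{R}$, split into finitely many pieces). The fractional chain rule gives $\|(-\Delta_\Omega)^{(s-1)/2}(|w|^{p-1}w)\|_{L^{q'_2}_t\dot B^{-\rho_2}_{D,r'_2,2}} \lesssim \|w\|_{L^{p_1}_t L^{r_1}_x}^{p-1}\,\|w\|_{L^{q_1}_t\dot B^{\rho_1}_{D,r_1,2}}$ where the $(p-1)$-fold factor is one of the critical norms from \eqref{w:Hsc est}, hence $\lesssim 2^{J(s_c-s)(p-1)} \ll 1$, and can be absorbed into the left side; this closes the $\dot H^s$ bound with $\|w\|_{L^\infty_t\dot H^s_D} \lesssim \|(w_0,w_1)\|_{\dot H^s_D\times\dot H^{s-1}_D} \lesssim \varepsilon$. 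Gluing the finitely many subintervals gives the global-in-time statement and global well-posedness in $\dot H^{s_c}_{D,rad}(\Omega)\cap\dot H^s_{D,rad}(\Omega)$, with continuous dependence following from the same contraction estimates. The main obstacle I anticipate is purely technical: verifying that every H\"older triple $(p_1,p_2)$ and every Strichartz pair used is simultaneously admissible in the sense of Proposition \ref{prop: ST est} (the conditions $0 \le \tfrac1{q_i}+\tfrac1{r_i}\le\tfrac12$, $r_i\neq\infty$, and the scaling identities) across the whole range $3<p<5$, and that the Besov-space formulation does not introduce endpoint losses — but no genuinely new idea beyond \cite{Sogge95:book, XuYang:NLW} is needed here.
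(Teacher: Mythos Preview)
Your plan is the paper's: small-data contraction at the critical regularity, persistence to $\dot H^s$, then Strichartz and the endpoint estimate for the auxiliary norms in \eqref{w:Hsc est}. One technical point worth flagging: you invoke the fractional chain rule (Corollary~\ref{cor:fcr}) at the exponents $s_c-1$ and $s-1$, both negative, whereas that corollary is stated only for $s\in(0,1]$. The paper sidesteps this by (i) running the contraction in the pure Lebesgue space $L^{2p/(1+s_c)}_t L^{2p/(2-s_c)}_x$, where the nonlinear estimate $\||w|^{p-1}w\|_{L^{2/(1+s_c)}_t L^{2/(2-s_c)}_x}\le\|w\|^p_{L^{2p/(1+s_c)}_t L^{2p/(2-s_c)}_x}$ is just H\"older, no derivatives needed; (ii) applying the chain rule for persistence at the \emph{positive} index $s-s_c$ in $L^{2p/(2-s_c)}_x$, so that $\||w|^{p-1}w\|_{L^{2/(1+s_c)}_t\dot H^{s-s_c,2/(2-s_c)}}\lesssim\|w\|^{p-1}_{L^{2p/(1+s_c)}_t L^{2p/(2-s_c)}_x}\|w\|_{L^{2p/(1+s_c)}_t\dot H^{s-s_c,2p/(2-s_c)}}$; and (iii) for the endpoint norm $L^2_t L^{3/(1-s_c)}_x$, replacing the $\dot H^{s_c-1}$ norm of the force by the Sobolev embedding $L^{3(p-1)/(p+1)}(\Omega)\hookrightarrow\dot H^{s_c-1}_D(\Omega)$ and then H\"older against $\|w\|_{L^{2(p-1)}_{t,x}}^{p-1}\|w\|_{L^2_t L^{3/(1-s_c)}_x}$. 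These are rearrangements of the same strategy, not a different proof, but they keep every chain-rule application at a positive Sobolev index and make the exponent bookkeeping you anticipate straightforward.
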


\begin{proof}
    We firstly prove the existence and uniqueness by the contraction mapping principle. We will show the following map $w\rightarrow \mathcal{T}(w)$ defined by the following formula
    \begin{equation}\label{map}
        \mathcal{T}(w)=\cos(t\sqrt{-\Delta_\Omega})w_0+\frac{\sin(t\sqrt{-\Delta_\Omega})}{\sqrt{-\Delta_\Omega}}w_1-\int_0^t \frac{\sin((t-s)\sqrt{-\Delta_\Omega})}{\sqrt{-\Delta_\Omega}}\left(|w|^{p-1}w\right)(s)\,ds,
    \end{equation}
    is a contraction map on the set 
    \begin{align*}
    X:=\{w\in C^0_t\left(\mathbb{\R};\dot{H}^{s_c}_{D, rad}(\Omega) \right) \cap L_t^{\frac{2p}{1+s_c}}L_x^{\frac{2p}{2-s_c}}(\mathbb{R}\times \Omega): & \\  \|w\|_{ L_t^\infty \dot{H}^{s_c}_{x,D}(\mathbb{R}\times \Omega)\cap L_t^{\frac{2p}{1+s_c}}L_x^{\frac{2p}{2-s_c}}(\mathbb{R}\times \Omega)}
   & \leqslant 2C\|(w_0,w_1)\|_{\dot{H}^{s_c}_D(\Omega)\times \dot{H}^{{s_c}-1}_D(\Omega)}\},
    \end{align*}
    with the metric $d(w_1,w_2)=\|w_1-w_2\|_{L_t^{\frac{2p}{1+s_c}}L_x^{\frac{2p}{2-s_c}}(\mathbb{R}\times\Omega)}$, and $C$ denotes the constant from the Strichartz estimate in Proposition \ref{prop: ST est}. 
    
   By the Strichartz estimate in Proposition \eqref{prop: ST est} and the H\"older inequality, we have  for any $w\in X$ that 
    \begin{align}
        \|\mathcal{T}(w)\|_{L_t^{\frac{2p}{1+s_c}}L_x^{\frac{2p}{2-s_c}}(\mathbb{R}\times \Omega)}\lesssim & \|(w_0,w_1)\|_{\dot{H}^{s_c}_D(\Omega)\times \dot{H}^{{s_c}-1}_D(\Omega)}+\||w|^{p-1}w\|_{L_t^{\frac{2}{1+s_c}}L_x^{\frac{2}{2-s_c}}(\mathbb{R}\times \Omega)} \nonumber\\
        \lesssim & \|(w_0,w_1)\|_{\dot{H}^{s_c}_D(\Omega)\times \dot{H}^{{s_c}-1}_D(\Omega)}+\|w\|^p_{L_t^{\frac{2p}{1+s_c}}L_x^{\frac{2p}{2-s_c}}(\mathbb{R}\times \Omega)}, \label{est:st pair}
    \end{align}
   and arguing as above, we obtain for any $w_1,w_2\in X$ that
    \begin{align}\label{est:contract}
        \|\mathcal{T}(w_1)-& \mathcal{T}(w_2)\|_{L_t^{\frac{2p}{1+s_c}}L_x^{\frac{2p}{2-s_c}}(\mathbb{R}\times \Omega)}\nonumber\\
        =& \left\|\int_0^t \frac{\sin((t-s)\sqrt{-\Delta_\Omega})}{\sqrt{-\Delta_\Omega}}\left(|w_1|^{p-1}w_1-|w_2|^{p-1}w_2\right)(s)\,ds\right\|_{L_t^{\frac{2p}{1+s_c}}L_x^{\frac{2p}{2-s_c}}(\mathbb{R}\times \Omega)}\nonumber\\
        \lesssim& \||w_1|^{p-1}w_1-|w_2|^{p-1}w_2\|_{L_t^{\frac{2}{1+s_c}}L_x^{\frac{2}{2-s_c}}(\mathbb{R}\times \Omega)}\nonumber\\
        \lesssim& \|w_1-w_2\|_{L_t^{\frac{2p}{1+s_c}}L_x^{\frac{2p}{2-s_c}}(\mathbb{R}\times \Omega)}\left(\|w_1\|_{L_t^{\frac{2p}{1+s_c}}L_x^{\frac{2p}{2-s_c}}(\mathbb{R}\times \Omega)}^{p-1}+\|w_2\|_{L_t^{\frac{2p}{1+s_c}}L_x^{\frac{2p}{2-s_c}}(\mathbb{R}\times \Omega)}^{p-1}\right).
    \end{align}
    	Thus, choosing $J=J(\epsilon)$ large enough (if necessary), we can guarantee that $\mathcal{T}$ maps the set $X$  into itself and is a contraction map on the set $X$. By the contraction mapping theorem, it follows that $\mathcal{T}$ has a fixed point $w$ in $X$.
    
%    Utilizing \eqref{sameindex} and the continuous argument, we know that 
%    \begin{equation}
%        \|w\|_{L_t^{\frac{2p}{1+s_c}}L_x^{\frac{2p}{2-s_c}}(\mathbb{R}\times \Omega)}\lesssim \|(w_0,w_1)\|_{\dot{H}^{s_c}_D(\Omega)\times \dot{H}^{{s_c}-1}_D(\Omega)}\lesssim 2^{j(s_c-s)}\varepsilon,
%    \end{equation}
%    so we conclude that the map $\mathcal{T}$ is contracted.

 Secondly,  by the Strichartz estimate,  the H\"older inequality and the fractional chain rule in Corollary \ref{cor:fcr}, we obtain
 \begin{align*}
 \|w\|_{L_{t,x}^{2(p-1)}(\mathbb{R}\times \Omega)}+ \|w\|_{L^{p-1}_t L^{3(p-1)}_x(\mathbb{R}\times\Omega)} \lesssim& \|(w_0,w_1)\|_{\dot{H}^{s_c}_D(\Omega)\times \dot{H}^{{s_c}-1}_D(\Omega)}+\|w\|_{L_t^{\frac{2p}{1+s_c}}L_x^{\frac{2p}{2-s_c}}(\mathbb{R}\times \Omega)}^p \\
 \lesssim& \|(w_0,w_1)\|_{\dot{H}^{s_c}_D(\Omega)\times \dot{H}^{{s_c}-1}_D(\Omega)}
 \end{align*}
% and
% \begin{align}
% \|w\|_{L^{p-1}_t L^{3(p-1)}_x(\mathbb{R}\times\Omega)}&\lesssim \|(w_0,w_1)\|_{\dot{H}^{s_c}_D(\Omega)\times \dot{H}^{s_c-1}_D(\Omega)}+\|w^p\|_{L^\frac{2(p-1)}{p+1}_tL^\frac{6(p-1)}{3p-1}_x(\mathbb{R}\times\Omega)}\nonumber\\
% &\lesssim \|(w_0,w_1)\|_{\dot{H}^{s_c}_D(\Omega)\times \dot{H}^{s_c-1}_D(\Omega)}+\|w\|^{p-1}_{L^{2(p-1)}_{t,x}(\mathbb{R}\times \Omega)}\|w\|_{L^{p-1}_t L^{3(p-1)}_x(\mathbb{R}\times\Omega)}\nonumber\\
% &\lesssim \|(w_0,w_1)\|_{\dot{H}^{s_c}_D(\Omega)\times \dot{H}^{s_c-1}_D(\Omega)}.
% \end{align}
 and 
    \begin{align*}
        \|w&\|_{L^\infty_t\left(\mathbb{R}; \dot{H}^s_{D,rad}( \Omega)\right)\cap  L^{2(p-1)}_t\left(\mathbb{R}; \dot{H}^{s-{s_c},2(p-1)}_{D,rad}( \Omega)\right) \cap L^{\frac{2p}{1+s_c}}_t\left(\mathbb{R}; \dot{H}^{s-{s_c},\frac{2p}{2-s_c}}_{D,rad}(\Omega)\right)}\\
       &\qquad \lesssim \|(w_0,w_1)\|_{\dot{H}^s_D(\Omega)\times \dot{H}^{s-1}_D(\Omega)}+\||w|^{p-1}w\|_{L^{\frac{2}{1+s_c}}_t\left(\mathbb{R}; \dot{H}^{s-{s_c},\frac{2}{2-s_c}}_{D,rad}(\Omega)\right)}\\ 
      & \qquad \lesssim \|(w_0,w_1)\|_{\dot{H}^s_D(\Omega)\times \dot{H}^{s-1}_D(\Omega)}+\|w\|^{p-1}_{L_t^{\frac{2p}{1+s_c}}L_x^{\frac{2p}{2-s_c}}(\mathbb{R}\times \Omega)} \|w\|_{L^{\frac{2p}{1+s_c}}_t\left(\mathbb{R}; \dot{H}^{s-{s_c},\frac{2p}{2-s_c}}_{D,rad}(\Omega)\right)}\\
      &\qquad  \lesssim \|(w_0,w_1)\|_{\dot{H}^s_D(\Omega)\times \dot{H}^{s-1}_D(\Omega)}.
    \end{align*}
    
Finally, by the endpoint Strichartz estimate in Proposition \ref{prop: end ST est} and the Sobolev inequality $L^\frac{3(p-1)}{p+1}(\Omega)\hookrightarrow\dot{H}^{s_c-1}_{D}(\Omega)$, we also have
    \begin{align}
        \|w\|_{L^2_t L^\frac{3}{1-s_c}_x(\mathbb{R}\times\Omega)}&\lesssim \|(w_0,w_1)\|_{\dot{H}^{s_c}_{D, rad}(\Omega)\times \dot{H}^{s_c-1}_{D, rad}(\Omega)}+\||w|^{p-1}w\|_{L^1_t\left(\mathbb{R};\dot{H}^{s_c-1}_{D, rad}(\Omega)\right)}\nonumber\\
        &\lesssim \|(w_0,w_1)\|_{\dot{H}^{s_c}_{D, rad}(\Omega)\times \dot{H}^{s_c-1}_{D, rad}(\Omega)}+\|w^{p-1}w\|_{L^1_tL_x^\frac{3(p-1)}{p+1}(\mathbb{R}\times\Omega)}\nonumber\\
        &\lesssim  \|(w_0,w_1)\|_{\dot{H}^{s_c}_{D,rad}(\Omega)\times \dot{H}^{s_c-1}_{D,rad}(\Omega)}+\|w\|^{p-1}_{L^{2(p-1)}_{t,x}(\mathbb{R}\times \Omega)}\|w\|_{L^2_t L^\frac{3}{1-s_c}_x(\mathbb{R}\times\Omega)}\nonumber\\
        &\lesssim \|(w_0,w_1)\|_{\dot{H}^{s_c}_{D,rad}(\Omega)\times \dot{H}^{s_c-1}_{D,rad}(\Omega)}. 
    \end{align}
then we can obtain the result.
\end{proof}

\subsection{Local-in-time energy analysis for low frequency part}\label{subs:v H1 LWP}
Let $w$ be the small solution of \eqref{eq:hfreqp} in Proposition \ref{prop:gwp hp},   we now consider the following difference equation
\begin{equation}\label{eq:lfreqp}
    \partial_t^2 v-\Delta v+F(v,w)=0
\end{equation} where $F(v,w)=|v+w|^{p-1}(v+w)-|w|^{p-1}w$ and  initial data $(v_0,v_1)\in \left( \dot H^{1}_{D, rad}(\Omega) \cap L^{p+1}_{rad}(\Omega) \right)\times  L^2_{rad}(\Omega) $ and the zero Dirichlet boundary value $v(t,x)=0$, $x\in \partial\Omega$.

By the Taylor formula, we can obtain the following explicit expression  
\begin{align}
    F(v,w)-|v|^{p-1}v
%    &= (v+w)^p-w^p-v^p \nonumber\\
%    &=pw\int_0^1 (v+\tau w)^{p-1}\,d\tau - pw\int_0^1 \tau w^{p-1}\,d\tau \nonumber\\
%    &=p(p-1)wv\int_0^1\int_0^1 (sv+\tau w)^{p-2}\,ds\,d\tau \nonumber\\
    &=\mathcal{O}(|v|^{p-1}|w|)+\mathcal{O}(|v||w|^{p-1}). \label{term: F}
\end{align}

Firstly, by the contraction mapping argument. we can obtain local well-posedness of \eqref{eq:lfreqp} in the energy space.

\begin{proposition}\label{prop:lwp lp}
    Let $w$ be the solution of \eqref{eq:hfreqp} in Proposition \ref{prop:gwp hp}, and $T=T\left(\|(v_0, v_1)\|_{\dot{H}^1_D(\Omega)\times L^2(\Omega)}\right)$ such that
    \begin{align*}
    T\cdot \|(v_0,v_1)\|^{p-1}_{\dot{H}^1_D(\Omega)\times L^2(\Omega)}\lesssim 1,
    \end{align*} then there exists a unique solution $v\in C\left([0, T);\dot{H}^1_{D}(\Omega)\right)$ of \eqref{eq:lfreqp}. Moreover, we have the estimate
    \begin{equation}
        \|v\|_{L_t^\infty\left([0, T); \dot{H}^1_{D}(\Omega)\right)}\lesssim \|(v_0,v_1)\|_{\dot{H}^1_D(\Omega)\times L^2(\Omega)}.
    \end{equation}
\end{proposition}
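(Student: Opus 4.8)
The plan is to solve the Duhamel form of \eqref{eq:lfreqp} by a contraction-mapping argument in the ball $B=\{v\in C([0,T);\dot{H}^1_D(\Omega)):\|v\|_X\le 2CR\}$, where $\|v\|_X:=\|v\|_{L^\infty_t([0,T);\dot{H}^1_D(\Omega))}$, $R:=\|(v_0,v_1)\|_{\dot{H}^1_D(\Omega)\times L^2(\Omega)}$, and $C$ is the constant in the energy estimate. Write
\[
\Phi(v)(t)=\cos(t\sqrt{-\Delta_\Omega})v_0+\frac{\sin(t\sqrt{-\Delta_\Omega})}{\sqrt{-\Delta_\Omega}}v_1-\int_0^t\frac{\sin((t-s)\sqrt{-\Delta_\Omega})}{\sqrt{-\Delta_\Omega}}F(v,w)(s)\,ds.
\]
Since $\cos(t\sqrt{-\Delta_\Omega})$, $\sin(t\sqrt{-\Delta_\Omega})$ are bounded on $L^2(\Omega)$ and $\|(-\Delta_\Omega)^{1/2}\tfrac{\sin(t\sqrt{-\Delta_\Omega})}{\sqrt{-\Delta_\Omega}}g\|_{L^2}=\|\sin(t\sqrt{-\Delta_\Omega})g\|_{L^2}\le\|g\|_{L^2}$, the trivial (Strichartz-free) energy estimate gives $\|\Phi(v)\|_X\lesssim R+\|F(v,w)\|_{L^1_tL^2_x([0,T)\times\Omega)}$ and $\|\Phi(v_1)-\Phi(v_2)\|_X\lesssim\|F(v_1,w)-F(v_2,w)\|_{L^1_tL^2_x([0,T)\times\Omega)}$, so the whole argument reduces to two nonlinear estimates in $L^1_tL^2_x$.

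The essential input is the radial Sobolev inequality (Proposition~\ref{prop:rad sob est}): interpolating $\dot{H}^1_{D,rad}(\Omega)\subset L^\infty(\Omega)$ with the standard embedding $\dot{H}^1_D(\Omega)\hookrightarrow L^6(\Omega)$ gives $\|g\|_{L^q(\Omega)}\lesssim\|g\|_{\dot{H}^1_D(\Omega)}$ for radial $g$ and all $6\le q\le\infty$, and since $3<p<5$ we have $2p>6$. Using the pointwise expansion $F(v,w)=|v|^{p-1}v+\mathcal{O}(|v|^{p-1}|w|)+\mathcal{O}(|v||w|^{p-1})$ from \eqref{term: F} and H\"older in $x$: $\||v|^{p-1}v\|_{L^2_x}=\|v\|_{L^{2p}_x}^p\lesssim\|v\|_{\dot{H}^1_D}^p$; $\||v|^{p-1}|w|\|_{L^2_x}\lesssim\|v\|_{L^{r_1}_x}^{p-1}\|w\|_{L^{3(p-1)}_x}\lesssim\|v\|_{\dot{H}^1_D}^{p-1}\|w\|_{L^{3(p-1)}_x}$ with $r_1=\tfrac{6(p-1)^2}{3p-5}$, which is $\ge 6$ precisely because $(p-2)(p-3)\ge0$; and $\||v||w|^{p-1}\|_{L^2_x}\lesssim\|v\|_{L^\infty_x}\|w\|_{L^{2(p-1)}_x}^{p-1}\lesssim\|v\|_{\dot{H}^1_D}\|w\|_{L^{2(p-1)}_x}^{p-1}$. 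Integrating in $t$ with H\"older and invoking \eqref{w:Hsc est} (which makes $\|w\|_{L^{p-1}_tL^{3(p-1)}_x}$, $\|w\|_{L^{2(p-1)}_{t,x}}\lesssim 2^{J(s_c-s)}$ finite and small), on the ball $B$ one gets
\[
\|F(v,w)\|_{L^1_tL^2_x([0,T)\times\Omega)}\lesssim T(2CR)^p+T^{\frac{p-2}{p-1}}2^{J(s_c-s)}(2CR)^{p-1}+T^{\frac12}2^{J(s_c-s)(p-1)}(2CR).
\]
All time exponents are positive and $2^{J(s_c-s)}$ is the fixed small number from Proposition~\ref{prop:gwp hp}, so the hypothesis $T\,\|(v_0,v_1)\|^{p-1}_{\dot{H}^1_D(\Omega)\times L^2(\Omega)}\lesssim1$ (with a small enough implicit constant) makes the right-hand side $\le CR$ and $\Phi$ maps $B$ into itself.

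For the contraction, the elementary inequality $\big||a|^{p-1}a-|b|^{p-1}b\big|\lesssim|a-b|(|a|^{p-1}+|b|^{p-1})$ applied with $a=v_1+w$, $b=v_2+w$ gives $|F(v_1,w)-F(v_2,w)|\lesssim|v_1-v_2|(|v_1|^{p-1}+|v_2|^{p-1}+|w|^{p-1})$, and the same H\"older splittings (placing $v_1-v_2$ and $v_i$ in $L^{2p}_x$ for the $v$-terms since $2p\ge6$, and $v_1-v_2$ in $L^\infty_x$, $w$ in $L^{2(p-1)}_x$ for the $w$-term) yield $\|F(v_1,w)-F(v_2,w)\|_{L^1_tL^2_x}\lesssim\big(T(2CR)^{p-1}+T^{1/2}2^{J(s_c-s)(p-1)}\big)\|v_1-v_2\|_X$, which is $\le\tfrac12\|v_1-v_2\|_X$ after shrinking $T$. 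The Banach fixed point theorem produces a unique $v\in B$ with, by construction, $\|v\|_X\lesssim R$. Time-continuity $v\in C([0,T);\dot{H}^1_D(\Omega))$ follows from strong continuity of the half-wave propagator on $\dot{H}^1_D(\Omega)\times L^2(\Omega)$ together with $F(v,w)\in L^1_tL^2_x$, which makes $t\mapsto\int_0^t\tfrac{\sin((t-s)\sqrt{-\Delta_\Omega})}{\sqrt{-\Delta_\Omega}}F(v,w)(s)\,ds$ continuous into $\dot{H}^1_D(\Omega)$; uniqueness in the full class $C([0,T);\dot{H}^1_D(\Omega))$ (not merely in $B$) is the standard argument of comparing two solutions on short subintervals where the difference estimate applies and iterating.

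The only genuine obstacle is the super-conformal term $\||v|^p\|_{L^2_x}$: on $\R^3$ this is not controlled by $\dot{H}^1$ alone (because $2p>6$) and would force auxiliary space-time norms for $v$, but the radial structure rescues us through Proposition~\ref{prop:rad sob est} and the induced embedding $\dot{H}^1_{D,rad}(\Omega)\hookrightarrow L^q(\Omega)$, $q\in[6,\infty]$, so that the whole contraction closes in the energy space with only the energy estimate. Everything else is bookkeeping: choosing the H\"older exponents in the mixed $v$--$w$ terms so that $v$ always lands in some $L^q(\Omega)$ with $q\ge6$ while $w$ lands in one of the Strichartz spaces of \eqref{w:Hsc est}, and checking the resulting powers of $T$ (and, where convenient, the small factor $2^{J(s_c-s)}$) are favorable.
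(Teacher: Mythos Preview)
Your proof is correct and follows the same strategy as the paper: run a contraction in $L^\infty_t\dot H^1_D$ via the energy/Strichartz estimate, control $\|F(v,w)\|_{L^1_tL^2_x}$ by splitting into $|v|^{p-1}v$, $|v|^{p-1}|w|$, $|v||w|^{p-1}$, and use the radial Sobolev embedding $\dot H^1_{D,rad}(\Omega)\hookrightarrow L^q(\Omega)$ for $q\ge 6$ to absorb the super-conformal power of $v$. The only difference is cosmetic H\"older bookkeeping on the mixed terms --- the paper puts $w$ in $L^2_tL^{3/(1-s_c)}_x$ (resp.\ $L^{p-1}_tL^{3(p-1)}_x$) for the $|v|^{p-1}|w|$ (resp.\ $|v||w|^{p-1}$) piece, while you use $L^{p-1}_tL^{3(p-1)}_x$ and $L^{2(p-1)}_{t,x}$ --- both are available from \eqref{w:Hsc est} and close the argument identically.
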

\begin{proof}
 Let $T>0$ be small determined later,  it suffices to show that the following map $\mathcal{T}$ 
    \begin{equation}\label{map}
        \mathcal{T}(v)=\cos(t\sqrt{-\Delta_\Omega})v_0+\frac{\sin(t\sqrt{-\Delta_\Omega})}{\sqrt{-\Delta_\Omega}}v_1-\int_0^t \frac{\sin((t-s)\sqrt{-\Delta_\Omega})}{\sqrt{-\Delta_\Omega}}F(v,w)(s)\,ds
    \end{equation}
 is a contraction map on the set
    \begin{align*}
    Y:=\left\{v\in C\left( [0, T); \dot{H}^1_{x,D}(\Omega)\right): \|v\|_{L_t^\infty\dot{H}^1_{x,D}((0,T]\times\Omega)}\leqslant 2C\|(v_0,v_1)\|_{\dot{H}^1_D(\Omega)\times L^2(\Omega)}\right\}
    \end{align*}
equipped with the metric $d(v_1,v_2)=\|v_1-v_2\|_{L^\infty\left([0, T); \dot{H}^1_{D}(\Omega)\right)}$, where $C$ denotes the constant from the Strichartz estimate.

By \eqref{term: F} and  the Strichartz estimate in Proposition \ref{prop: ST est}, we have
\begin{align}
            \|\mathcal{T}(v)\|_{L_t^\infty\dot{H}^1_{x,D}((0,T]\times\Omega)}&\lesssim \|(v_0,v_1)\|_{\dot{H}^1_D(\Omega)\times L^2(\Omega)}+\|F(v,w)\|_{L_t^1L_x^2((0,T]\times\Omega)} \nonumber\\
            &= C \|(v_0,v_1)\|_{\dot{H}^1_D(\Omega)\times L^2(\Omega)}+\text{\uppercase\expandafter{\romannumeral1}}+\text{\uppercase\expandafter{\romannumeral2}}+\text{\uppercase\expandafter{\romannumeral3}} \label{est: energy lwp}
\end{align}

By the H\"older inequality,  the Sobolev inequality and  the radial Sobolev inequality in Proposition \ref{prop:rad sob est}, we have
\begin{align}
    \text{\uppercase\expandafter{\romannumeral1}}
    \lesssim  \int_0^T\||v|^{p-1}v\|_{L^{2}(\Omega)}\,dt\lesssim& \int_0^T\|v(t)\|_{L^6(\Omega)}^3\|v(t)\|^{p-3}_{L^\infty(\Omega)}\,dt   \nonumber \\
%     \lesssim& \int_0^T\|v(t)\|_{L^6(\Omega)}^3\||x|^{\frac{1}{2}}v(t)\|^{p-3}_{L^\infty(\Omega)}\,dt  \nonumber \\
         \lesssim& \int_0^T\|v(t)\|^{3}_{L^6(\Omega)} \|v(t)\|^{p-3}_{\dot H^1_{D}(\Omega)}\,dt 
    \lesssim\; T\, \|v\|^p_{L^\infty\left([0, T);\dot{H}^1_{D}(\Omega)\right)}.  \label{term: est1}
\end{align}
By the analogue estimates, we have
\begin{align}\label{term: est2}
    \text{\uppercase\expandafter{\romannumeral2}}
    \lesssim \int_0^T\||v|^{p-1}|w|\|_{L^2(\Omega)}\,dt &\lesssim\int_0^T\|v(t)\|_{L^{\frac{3(p-1)^2}{2(p-2)}}(\Omega)}^{p-1}  \|w(t)\|_{L^{\frac{3}{1-s_c}}(\Omega)}\,dt \nonumber\\
    &\lesssim\int_0^T\|v(t)\|^{\frac{4(p-2)}{p-1}}_{L^6(\Omega)} \|v(t)\|^{(p-1)-\frac{4(p-2)}{p-1}}_{L^{\infty}(\Omega)} \|w(t)\|_{L^{\frac{3}{1-s_c}}(\Omega)}\,dt \nonumber\\
    &\lesssim T^\frac{1}{2}\, \|w\|_{L_t^2L_x^{\frac{3}{1-s_c}}([0,T)\times\Omega)}\|v\|_{L_t^\infty\left([0, T); \dot{H}^1_{D}(\Omega)\right)}^{p-1},
\end{align}
and
\begin{align}\label{term: est3}
        \text{\uppercase\expandafter{\romannumeral3}}
        \lesssim \int_0^T \||v||w|^{p-1}\|_{L^2(\Omega)}\,dt
        &\lesssim \int_0^T\|v(t)\|_{L^6(\Omega)}\|w(t)\|^{p-1}_{L^{3(p-1)}(\Omega)}\,dt\nonumber\\
        &\lesssim \|w\|_{L_t^{p-1}L_x^{3(p-1)}([0,T)\times\Omega)}^{p-1}\|v\|_{L_t^\infty\left([0, T);\dot{H}^1_{D}(\Omega)\right)} ,
\end{align}
%and
%\begin{align}\label{term: est4}
%    \text{\uppercase\expandafter{\romannumeral4}} &\lesssim \int_0^T \|\mathcal{O}(|v(t)|^{p-2}|w(t)|^2)\|_{L^2(\Omega)}\,dt  \lesssim  \text{\uppercase\expandafter{\romannumeral2}}
%    +  \text{\uppercase\expandafter{\romannumeral3}} . 
%\end{align}
Insertting \eqref{term: est1},\eqref{term: est2}, and \eqref{term: est3} into \eqref{est: energy lwp}, we obtain
 \begin{align}
\|\mathcal{T}(v)\|_{L_t^\infty\left([0, T); \dot{H}^1_{D}(\Omega)\right)}
 \lesssim &  \|(v_0,v_1)\|_{\dot{H}^1_D(\Omega)\times L^2(\Omega)} 
  + 
T\, \|v\|^p_{L^\infty\left([0, T);\dot{H}^1_{D}(\Omega)\right)} \nonumber \\
&  +  T^\frac{1}{2}\, \|w\|_{L_t^2L_x^{\frac{3}{1-s_c}}([0,T)\times\Omega)}\|v\|_{L_t^\infty\left([0, T); \dot{H}^1_{D}(\Omega)\right)}^{p-1}  \nonumber \\
& +  \|w\|_{L_t^{p-1}L_x^{3(p-1)}([0,T)\times\Omega)}^{p-1}\|v\|_{L_t^\infty\left([0, T);\dot{H}^1_{D}(\Omega)\right)} . \label{est: energy lwp2}
\end{align}

Furthermore, by the Strichartz estimate in Proposition \eqref{prop: ST est}, we have
\begin{align*}
    \|\mathcal{T}(v_1)-\mathcal{T}(v_2)\|_{L_t^\infty\left([0, T);\dot{H}^1_{D}(\Omega)\right)}\lesssim \|F(v_1,w)-F(v_2,w)\|_{L_t^1L_x^2((0,T]\times \Omega)}
    \lesssim \text{\uppercase\expandafter{\romannumeral1}}'+\text{\uppercase\expandafter{\romannumeral2}}'+\text{\uppercase\expandafter{\romannumeral3}}'. 
\end{align*}
We estimate one by one as follows.
\begin{align*}
    \text{\uppercase\expandafter{\romannumeral1}}'&\lesssim \int_0^T\left\||v_1|^{p-1}v_1-|v_2|^{p-1}v_2\right\|_{L^{2}(\Omega)}\,dt \\
   &\lesssim \int_0^T\|v_1(t)-v_2(t)\|_{L^6(\Omega)}\left(\|v_1(t)\|_{L^{3(p-1)}(\Omega)}^{p-1}+\|v_2(t)\|_{L^{3(p-1)}(\Omega)}^{p-1}\right)\,dt \\
   &\lesssim T\, \|v_1-v_2\|_{L_t^\infty\left([0, T); \dot{H}^1_{D}(\Omega)\right)}\left(\|v_1\|^{p-1}_{L_t^\infty\left([0, T); \dot{H}^1_{D}(\Omega)\right)}+\|v_2\|^{p-1}_{L_t^\infty\left([0, T); \dot{H}^1_{D}(\Omega)\right)}\right),
\end{align*}
and
\begin{align*}
   \text{\uppercase\expandafter{\romannumeral2}}'
   \lesssim&  \int_0^T\left\|\big(|v_1|^{p-1}-|v_2|^{p-1}\big)|w|\right\|_{L^2(\Omega)}\,dt \\
 \lesssim   & \int_0^T \|v_1(t)-v_2(t)\|_{L^6(\Omega)}\|w(t)\|_{L^\frac{3}{1-s_c}(\Omega)}(\|v_1(t)\|_{L^\frac{6(p-1)(p-2)}{3p-7}(\Omega)}^{p-2}+\|v_2(t)\|_{L^\frac{6(p-1)(p-2)}{3p-7}(\Omega)}^{p-2})\,dt  \\
\lesssim    &  T^\frac{1}{2}\|v_1-v_2\|_{L_t^\infty\left([0, T); \dot{H}^1_{D}(\Omega)\right)}\|w\|_{L_t^2L_x^\frac{3}{1-s_c}((0,T]\times\Omega)}\left(\|v_1\|^{p-2}_{L_t^\infty\left([0, T); \dot{H}^1_{D}(\Omega)\right)} + \|v_2\|^{p-2}_{L_t^\infty\left([0, T); \dot{H}^1_{D}(\Omega)\right)} \right),
\end{align*}
and
\begin{align*}
    \text{\uppercase\expandafter{\romannumeral3}}'\lesssim \int_0^T \left\| |v_1-v_2||w(t)|^{p-1}\right\|_{L^2(\Omega)}\,dt 
    &\lesssim \int_0^T\|v_1(t)-v_2(t)\|_{L^6(\Omega)}\|w(t)\|_{L^{3(p-1)}(\Omega)}^{p-1}\,dt \\
    &\lesssim \|v_1-v_2\|_{L_t^\infty\left([0, T); \dot{H}^1_{D}(\Omega)\right)}\|w\|_{L_t^{p-1}L_x^{3(p-1)}([0,T)\times\Omega)}^{p-1}, 
\end{align*}
%and
%\begin{align*}
%\text{\uppercase\expandafter{\romannumeral4}}'&\lesssim \int_0^T \|\mathcal{O}\left((|v_1(t)|^{p-2}-|v_2(t)|^{p-2}\right)\, |w(t)|^2)\|_{L^2(\Omega)}\,dt \lesssim \text{\uppercase\expandafter{\romannumeral2}}' + \text{\uppercase\expandafter{\romannumeral3}}'.
%\end{align*}
	Thus, by  Proposition \ref{prop:gwp hp} and choosing $T$ such that $$2\; T \cdot  \left(2 \; C \left\| \big( v_0, v_1 \big )\right\|_{\dot H^{1}_{D, rad}(\Omega)  \times L^2(\Omega)}   \right)^{p-1}\leq \frac12, $$  we can guarantee that $\mathcal{T}$ maps the set $Y$ into itself and is a contraction map on the set $Y$. By the contraction mapping theorem, it follows that $\mathcal{T}$ has a fixed point $v$ in $Y$. 
\end{proof}

 \subsection{Global-in-time energy analysis for low frequency part} \label{subs: lfreq H1 growth control}
In this part, we extend local energy solution $v$ of the difference equation \eqref{eq:lfreqp} to global one. By Proposition \ref{prop:lwp lp}, it suffices to control  the growth of the energy of the solution $v$ of \eqref{eq:lfreqp}. Let us denote the energy of $v$ by
\begin{equation}
    E(v)(t)=\int_\Omega \frac{1}{2}|v_t(t)|^2+\frac{1}{2}|\nabla v(t)|^2+\frac{1}{p+1}|v(t)|^{p+1}\,dx,
\end{equation}
and set $E_T=\max\limits_{0\leqslant t\leqslant T}E(v)(t)$, where $[0,T)$ is the maximal lifespan interval in Proposition \ref{prop:lwp lp}. In fact, the energy is not a conserved because of the external source term $F(v,w)-|v|^{p-1}v$.

Now we give the control of the energy of $v$.
\begin{proposition}\label{prop: v energy est}
    Let $w$ be the global solution of \eqref{eq:hfreqp}, $v$ be the local energy solution of \eqref{eq:lfreqp} in Proposition \ref{prop:lwp lp}, and $[0,T)$ be the maximal existence interval of $v$, then for any $t\in [0,T)$, we have 
    \begin{align}
                E(v)(t)\leqslant E_T\lesssim E(v)(0)& +T^\frac{1}{2}E_T^\frac{9(p-1)}{2(p+3)}\|w\|_{L_t^2L_x^\frac{3}{1-s_c}([0,T)\times \Omega)} +E_T \|w\|^{p-1}_{L_t^{p-1}L_x^{3(p-1)}([0,T)\times \Omega)} 
    \end{align}
\end{proposition}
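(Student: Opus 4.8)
The plan is to derive the (approximate) energy identity for the low-frequency profile $v$ and then bound the two error integrals it produces, the decisive point being that for the super-conformal range $p>3$ one must invoke the radial Sobolev inequality of Proposition~\ref{prop:rad sob est}, not merely the embedding $\dot{H}^1_D(\Omega)\hookrightarrow L^6(\Omega)$. First I would pair \eqref{eq:lfreqp} with $v_t$ and integrate by parts over $\Omega$; the boundary contribution drops since $v$, hence $v_t$, vanishes on $\partial\Omega$, so that for a.e.\ $t\in[0,T)$
\begin{equation*}
\frac{d}{dt}E(v)(t)=\int_\Omega v_t\big(|v|^{p-1}v-F(v,w)\big)\,dx=\int_\Omega v_t\left(\mathcal{O}\big(|v|^{p-1}|w|\big)+\mathcal{O}\big(|v|\,|w|^{p-1}\big)\right)dx,
\end{equation*}
where the second equality is the Taylor expansion \eqref{term: F}. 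This computation is legitimate for the energy-class solution of Proposition~\ref{prop:lwp lp} after the standard regularization/persistence argument, which I will not dwell on. Integrating in time and taking absolute values,
\begin{equation*}
E(v)(t)\leq E(v)(0)+C\int_0^t\int_\Omega|v_t|\,|v|^{p-1}|w|\,dx\,dt'+C\int_0^t\int_\Omega|v_t|\,|v|\,|w|^{p-1}\,dx\,dt'.
\end{equation*}

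Next I would record the a priori bounds coming from $E_T=\max_{[0,T)}E(v)$: for every $t\in[0,T)$ one has $\|v_t(t)\|_{L^2(\Omega)}+\|v(t)\|_{\dot{H}^1_D(\Omega)}\lesssim E_T^{1/2}$ and $\|v(t)\|_{L^{p+1}(\Omega)}\lesssim E_T^{1/(p+1)}$. Here the new ingredient enters: applying Proposition~\ref{prop:rad sob est} with Lebesgue exponent $p+1$ to the radial function $v(t)\in\dot{H}^1_D(\Omega)\cap L^{p+1}(\Omega)$ yields
\begin{equation*}
\|v(t)\|_{L^\infty(\Omega)}\lesssim\|v(t)\|_{L^{p+1}(\Omega)}^{(p+1)/(p+3)}\,\|v(t)\|_{\dot{H}^1_D(\Omega)}^{2/(p+3)}\lesssim E_T^{2/(p+3)}.
\end{equation*}

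Then I would estimate the two error terms. For the first, Hölder in space with $\tfrac12+\tfrac{2(p-2)}{3(p-1)}+\tfrac{1-s_c}{3}=1$ gives
\begin{equation*}
\int_\Omega|v_t|\,|v|^{p-1}|w|\,dx\leq\|v_t\|_{L^2(\Omega)}\,\|v\|_{L^{3(p-1)^2/(2(p-2))}(\Omega)}^{p-1}\,\|w\|_{L^{3/(1-s_c)}(\Omega)};
\end{equation*}
interpolating $\|v\|_{L^{3(p-1)^2/(2(p-2))}}$ between $L^6(\Omega)$ (controlled by $\dot{H}^1_D(\Omega)$) and $L^\infty(\Omega)$ and substituting the bounds above, a short bookkeeping of exponents gives $\|v\|_{L^{3(p-1)^2/(2(p-2))}}^{p-1}\lesssim E_T^{2(2p-3)/(p+3)}$, hence the spatial integral is $\lesssim E_T^{1/2+2(2p-3)/(p+3)}\|w\|_{L^{3/(1-s_c)}}=E_T^{9(p-1)/(2(p+3))}\|w\|_{L^{3/(1-s_c)}}$; Cauchy--Schwarz in $t'$ then produces the factor $T^{1/2}\|w\|_{L^2_tL^{3/(1-s_c)}_x([0,T)\times\Omega)}$. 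For the second, Hölder with $\tfrac12+\tfrac16+\tfrac13=1$ gives
\begin{equation*}
\int_\Omega|v_t|\,|v|\,|w|^{p-1}\,dx\leq\|v_t\|_{L^2(\Omega)}\,\|v\|_{L^6(\Omega)}\,\|w\|_{L^{3(p-1)}(\Omega)}^{p-1}\lesssim E_T\,\|w\|_{L^{3(p-1)}(\Omega)}^{p-1},
\end{equation*}
and integration in $t'$ directly yields $E_T\|w\|_{L^{p-1}_tL^{3(p-1)}_x([0,T)\times\Omega)}^{p-1}$; the two $w$-norms are finite by Proposition~\ref{prop:gwp hp}. Inserting these into the displayed bound for $E(v)(t)$ and taking the supremum over $t\in[0,T)$ gives the asserted estimate.

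The main obstacle is precisely the first error term: for $p>3$ the Lebesgue exponent $3(p-1)^2/(2(p-2))$ is strictly bigger than $6$, so $|v|^{p-1}|w|$ cannot be put into $L^1_x$ via Sobolev embedding alone, and one genuinely needs the radial pointwise bound of Proposition~\ref{prop:rad sob est}; one then has to verify that the several fractional powers of $E_T$ collapse to exactly $9(p-1)/(2(p+3))$. The remaining work — the rigorous justification of the energy identity for solutions lying a priori only in the energy class, and the elementary exponent arithmetic in the Hölder and interpolation steps — is routine.
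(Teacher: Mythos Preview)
Your proof is correct and follows essentially the same route as the paper: the energy identity, the Taylor expansion \eqref{term: F}, the H\"older splitting placing $w$ in $L^{3/(1-s_c)}_x$ (respectively $L^{3(p-1)}_x$), and---for the super-conformal piece $|v|^{p-1}|w|$---the use of the radial Sobolev bound $\|v\|_{L^\infty}\lesssim E_T^{2/(p+3)}$ to interpolate the excess integrability above $L^6$. Your interpolation of $\|v\|_{L^{3(p-1)^2/(2(p-2))}}^{p-1}$ between $L^6$ and $L^\infty$ reproduces exactly the paper's exponents $\|v\|_{L^6}^{4(p-2)/(p-1)}\|v\|_{L^\infty}^{(p-3)^2/(p-1)}$, and the final power $\tfrac12+\tfrac{2(2p-3)}{p+3}=\tfrac{9(p-1)}{2(p+3)}$ is verified.
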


\begin{proof}
    Firstly, by \eqref{term: F},  we have the energy variation
    \begin{align}\label{term:dtE}
     \frac{d}{dt}E(t)&=\int_\Omega v_t\left(F(v,w)-|v|^{p-1}v\right)\, dx\nonumber\\
     &=-\int_\Omega v_t\left(\mathcal{O}(|v|^{p-1}|w|)+\mathcal{O}(|v||w|^{p-1})\right)\,dx.
    \end{align}
By the H\"older inequality and the radial Sobolev inequality in Proposition \ref{prop:rad sob est}, we have
\begin{align}\label{term:w1}
    \left|\int_0^T\int_\Omega v_t |v|^{p-1}|w|\,dx\,dt\right|&\lesssim \int_0^T \|v_t\|_{L^2(\Omega)}\|w\|_{L^\frac{3}{1-s_c}(\Omega)}\|v\|_{L^6(\Omega)}^\frac{4(p-2)}{p-1}\|v\|_{L^\infty(\Omega)}^\frac{(p-3)^2}{p-1}\; dt \nonumber\\
    &\lesssim \int_0^T \|v_t\|_{L^2(\Omega)}\|w\|_{L^\frac{3}{1-s_c}(\Omega)}\|v\|_{\dot{H}^1(\Omega)}^\frac{4(p-2)}{p-1}\||x|^{\frac{4}{p+3}}v\|_{L^\infty(\Omega)}^\frac{(p-3)^2}{p-1}\; dt\nonumber\\
     &\lesssim \int_0^T \|v_t\|_{L^2(\Omega)}\|w\|_{L^\frac{3}{1-s_c}(\Omega)}\|v\|_{\dot{H}^1(\Omega)}^\frac{4(p-2)}{p-1}\
     \|v\|^{\frac{(p+1)(p-3)^2}{(p-1)(p+3)}}_{L^{p+1}(\Omega)}\|v\|^{\frac{2(p-3)^2}{(p-1)(p+3)}}_{\dot H^1_D(\Omega)}
     \; dt\nonumber\\
    &\lesssim T^\frac{1}{2}\, E_T^{\frac{1}{2}+\frac{2(p-2)}{p-1}+\frac{2(p-3)^2}{(p-1)(p+3)}}\|w\|_{L_t^2L_x^\frac{3}{1-s_c}([0,T)\times \Omega)}\nonumber\\
    &\lesssim T^\frac{1}{2}\, E_T^\frac{9(p-1)}{2(p+3)}\|w\|_{L_t^2L_x^\frac{3}{1-s_c}([0,T)\times \Omega)},
\end{align}    
%and
%\begin{align}\label{term:w2}    
%    \left|\int_0^T\int_\Omega v_t\mathcal{O}(|v|^{p-2}|w|^2)\,dx\,dt\right|&\lesssim \int_0^T \|v_t\|_{L^2(\Omega)}\|w\|^2_{L^\frac{3}{1-s_c}(\Omega)}\|v\|_{L^6(\Omega)}^{\frac{5-p}{p-1}}\|v\|_{L^{p+1}(\Omega)}^\frac{(p+1)(p-3)}{(p-1)}\; dt \nonumber\\
%    &\lesssim E_T^{\frac{1}{2} +\frac{5-p}{2(p-1)} +\frac{p-3}{p-1}}\|w\|^2_{L_t^2L_x^\frac{3}{1-s_c}([0,T)\times \Omega)} \nonumber \\
%     &\lesssim E_T\, \|w\|^2_{L_t^2L_x^\frac{3}{1-s_c}([0,T)\times \Omega)}
%\end{align}    
and
\begin{align}\label{term:w3}    
    \left|\int_0^T\int_\Omega v_t |v||w|^{p-1}\,dx\,dt\right|&\lesssim \int_0^T \|v_t\|_{L^2(\Omega)}\|w\|_{L^{3(p-1)}(\Omega)}^{p-1}\|v\|_{L^6(\Omega)}\; dt\nonumber\\
    &\lesssim E_T\, \|w\|^{p-1}_{L_t^{p-1}L_x^{3(p-1)}([0,T)\times \Omega)}.
\end{align}
Inserting \eqref{term:w1} and \eqref{term:w3} into \eqref{term:dtE}, we can obtain the result. 
\end{proof}

Now we can control the energy growth of $v$. Noticed that $(u_0, u_1)\in \left(\dot H^{s}_{D}(\Omega) \cap L^{p+1}(\Omega) \right)\times \dot H^{s-1}_{D}(\Omega)$, we have
\begin{align}\label{e0}
        E(v)(0)&\lesssim\|(v_0,v_1)\|^2_{\dot{H}^1_D(\Omega)\times L^2(\Omega)}+\|v_0\|^{p+1}_{L^{p+1}(\Omega)}\nonumber\\
        &\lesssim 2^{2J(1-s)}\|(v_0,v_1)\|_{\dot{H}^{s}_D(\Omega)\times \dot{H}^{s-1}_D(\Omega)}^2+\|v_0\|^{p+1}_{L^{p+1}(\Omega)} \lesssim 2^{2J(1-s)}. 
\end{align}
By Proposition \ref{prop:gwp hp} and Proposition \ref{prop: v energy est}, we have
\begin{equation}
    E(v)(t)\leqslant E_T\lesssim 2^{2J(1-s)}+T^\frac{1}{2}E_T^\frac{9(p-1)}{2(p+3)}2^{J(s_c-s)}+E_T2^{2J(s_c-s)}. 
\end{equation}
If we choose $T$ such that
\begin{equation}\label{est:T}
    T^\frac{1}{2}2^{2J(1-s)\frac{9(p-1)}{2(p+3)}}2^{J(s_c-s)}\approx 2^{2J(1-s)}\Longleftrightarrow  T\approx  2^{\frac{2J}{p+3} \left[4(2p-3)s -4(2p-3)+(p+3)(1-s_c)\right]},
\end{equation}
we obtain that
\begin{equation}\label{est:Energy}
    E_T\lesssim 2^{2J(1-s)}\approx 
    T^{\frac{(p+3)(1-s)}{4(2p-3)s-4(2p-3)+(p+3)(1-s_c)} },
\end{equation}
which shows that the energy of $v$ can be controlled globally as long as  $s>1-\frac{(p+3)(1-s_c)}{4(2p-3)}$. 

%In fact, one also have the estimate
%\begin{equation}
%    \left\|\int_0^t \frac{\sin((t-s)\sqrt{-\Delta_\Omega})}{\sqrt{-\Delta_\Omega}}F(v,w)(s)\,ds\right\|_{L^\infty_t\dot{H}^1_{x,D}((0,T]\times \Omega)}\lesssim E_T \lesssim T^\frac{(p+3)(p-1)(1-s)}{9(p-1)^2s-10p^2+20p+6}.
%\end{equation}

\subsection{Growth estimate of low frequency part in $\dot{H}^s_{D}(\Omega)$}
From Subsection \ref{subsect:w}, and Subsection \ref{subs: lfreq H1 growth control}, we know that the solution $u$ of \eqref{eq:pnlw} globally exists in  $\dot H^{s_c}_{D, rad}(\Omega) \cap \dot H^s_{D, rad}(\Omega) + \dot H^{1}_{D, rad}(\Omega) $ with  $s>1-\frac{(p+3)(1-s_c)}{4(2p-3)}$. Now we show
the $\dot H^s_D(\Omega)$-estimate of $u$ and complets the proof of Theorem \ref{thm:GWP}.

By Proposition \ref{prop:gwp hp}, it suffices to show the estimate of $v$ in $\dot H^s_D(\Omega)$. Noticed that the homogeneous part of $v$ 
\[\cos(t\sqrt{-\Delta_\Omega})v_0+\frac{\sin(t\sqrt{-\Delta_\Omega})}{\sqrt{-\Delta_\Omega}}v_1\]
is bounded in $\dot{H}^s_D(\Omega)$ by the energy estimate. 

It reduces to show the estimate of inhomogeneous part of $v$ in $\dot{H}^s_D(\Omega)$. 

Using Proposition \ref{prop: v energy est} and interpolation argument, it suffices to show the estimate of inhomogeneous part of $v$ in $L^2(\Omega)$. By the distorted Fourier transform, we have for any $0\leqslant t\leqslant T$,
\begin{align}
    &\left\|\int_0^t \frac{\sin((t-s)\sqrt{-\Delta_\Omega})}{\sqrt{-\Delta_\Omega}}F(v,w)(s)\,ds\right\|_{L^\infty_tL^2_x([0,T)\times \Omega)}\nonumber\\
    \lesssim &\int^t_0  \left \| \frac{\sin\big((t-s)\lambda\big)}{\lambda} \mathcal{F}_{D}F(v,w)(\lambda) \right\|_{L^2_{\lambda}(\mathbb{R}^+)}  \, ds \nonumber\\
    \lesssim & \int^t_0  (t-s) \left\|v(s)^{p-1}v(s)+\mathcal{O}(|v(s)|^{p-1}|w(s)|)+\mathcal{O}(|v(s)||w(s)|^{p-1}) \right\|_{L^2(\Omega)}  \, ds. 
    \end{align}
    
    We estimate one by one as follows.
    \begin{align*}
 \int^t_0  (t-s) \left\|v(s)^{p-1}v(s)\right\|_{L^2(\Omega)}\,ds
  \lesssim & \int^t_0  (t-s)\|v(s)\|_{L^6(\Omega)}^3\||x|^{\frac{4}{p+3}}v(s)\|^{p-3}_{L^\infty(\Omega)} \,ds \nonumber \\
    \lesssim & \int^t_0  (t-s)\|v(s)\|_{L^6(\Omega)}^3 \|v\|^{\frac{(p+1)(p-3)}{p+3}}_{L^{p+1}(\Omega)} \|v\|^{\frac{2(p-3)}{p+3}}_{\dot H^1_D(\Omega)}\,ds 
    \lesssim T^2 E_T^{\frac{3}{2}+\frac{2(p-3)}{p+3}}, 
         \end{align*}
 and
     \begin{align*}
    \int^t_0  (t-s) \left\||v(s)|^{p-1}|w(s)|\right\|_{L^2(\Omega)}\,ds 
    \lesssim &     \int^t_0 (t-s)\|v(s)\|_{L^{\frac{3(p-1)^2}{2(p-2)}}(\Omega)}^{p-1}\|w(s)\|_{L^{\frac{3}{1-s_c}}(\Omega)}\,ds\nonumber\\
    \lesssim &   \int^t_0 (t-s)\|v(s)\|^{\frac{4(p-2)}{p-1}}_{L^6(\Omega)} \|v(s)\|^{\frac{(p-3)^2}{p-1}}_{L^{\infty}(\Omega)}\|w(s)\|_{L^{\frac{3}{1-s_c}}(\Omega)}\,ds\nonumber \\
    \lesssim &     \int^t_0 (t-s)\|v(s)\|^{\frac{4(p-2)}{p-1}}_{L^6(\Omega)} 
    \|v\|^{\frac{(p+1)(p-3)^2}{(p-1)(p+3)}}_{L^{p+1}(\Omega)}\|v\|^{\frac{2(p-3)^2}{(p-1)(p+3)}}_{\dot H^1_D(\Omega)} \|w(s)\|_{L^{\frac{3}{1-s_c}}(\Omega)}\,ds\nonumber \\
    \lesssim &  T^{\frac{3}{2}}\, E_T^{\frac{2(p-2)}{p-1}+\frac{(p-3)^2}{(p-1)(p+3)} +\frac{(p-3)^2}{(p-1)(p+3)} }\|w\|_{L_t^2L_x^\frac{3}{1-s_c}([0,T)\times \Omega)},
        \end{align*}
%and
%    \begin{align*}\int^t_0  (t-s) \|\mathcal{O}(|v(s)|^{p-2}|w(s)|^2)\|_{L^2(\Omega)}\,ds 
%    \lesssim &     \int^t_0  (t-s) \|w(s)\|^2_{L^\frac{3}{1-s_c}(\Omega)}\|v(s)\|_{L^6(\Omega)}^{\frac{5-p}{p-1}}\|v(s)\|_{L^{p+1}(\Omega)}^\frac{(p+1)(p-3)}{(p-1)}\,ds\nonumber\\
%%    \lesssim &     \int^t_0  (t-s) \|w(s)\|^2_{L^\frac{3}{1-s_c}(\Omega)}\|v(s)\|_{L^6(\Omega)}^{\frac{5-p}{p-1}}\|v(s)\|_{L^{p+1}(\Omega)}^\frac{(p+1)(p-3)}{(p-1)}\,ds\nonumber\\
%    \lesssim & \; T\, E_T^{\frac{5-p}{2(p-1)}+\frac{p-3}{p-1}}\|w\|^2_{L_t^2L_x^\frac{3}{1-s_c}([0,T)\times \Omega)}, 
%        \end{align*}
and   
    \begin{align*}
    \int^t_0  (t-s) \left\||v(s)||w(s)|^{p-1}\right\|_{L^2(\Omega)}\,ds 
    \lesssim & \int^t_0  (t-s) \|w(s)\|_{L^{3(p-1)}(\Omega)}^{p-1}\|v(s)\|_{L^6(\Omega)}\,ds\nonumber\\
%    \lesssim     &\int^t_0  (t-s) \|w(s)\|_{L^{3(p-1)}(\Omega)}^{p-1}\|v(s)\|_{L^6(\Omega)}\,ds\nonumber\\
    \lesssim & \;  T\, E_T^{\frac12}\; \|w\|^{p-1}_{L_t^{p-1}L_x^{3(p-1)}([0,T)\times \Omega)}.  
    \end{align*}
 
  Therefore, we have 

     \begin{align}
 &  \left\|\int_0^t \frac{\sin((t-s)\sqrt{-\Delta_\Omega})}{\sqrt{-\Delta_\Omega}} F(v,w)(s)\,ds\right\|_{L^\infty_tL^2_x([0,T)\times \Omega)}  \nonumber\\    \lesssim & \; T^2 E_T^{\frac{3}{2}+\frac{2(p-3)}{p+3}  }
    + T^{\frac{3}{2}}\, E_T^{\frac{2(p-2)}{p-1}+\frac{2(p-3)^2}{(p-1)(p+3)} } \|w\|_{L_t^2L_x^\frac{3}{1-s_c}([0,T)\times \Omega)}   + T\, E_T^{\frac{1}{2}}   \|w\|^{p-1}_{L_t^{p-1}L_x^{3(p-1)}([0,T)\times \Omega)}  \nonumber \\ 
\lesssim & \;     T^{2+\frac{(7p-3)(1-s)}{2\left[4(2p-3)s-4(2p-3)+(p+3)(1-s_c)\right]} },
\end{align}
where $T$ determined by \eqref{est:T}. Therefore, using the interpolation between $L^2(\Omega)$ and $\dot{H}^1_{D}(\Omega)$, we have 
\begin{align*}
        &\left\|\int_0^t \frac{\sin((t-s)\sqrt{-\Delta_\Omega})}{\sqrt{-\Delta_\Omega}}F(v,w)(s)\,ds\right\|_{L^\infty_t\left([0,T);\dot{H}^s_{D}( \Omega)\right) } \\
        \lesssim& \left(T^{2+\frac{(7p-3)(1-s)}{2\left[4(2p-3)s-4(2p-3)+(p+3)(1-s_c)\right]} }\right)^{1-s}
        \left(    T^{\frac{(p+3)(1-s)}{2\left[4(2p-3)s-4(2p-3)+(p+3)(1-s_c)\right]} }\right)^s  \\
        \lesssim &  T^{2(1-s)+\frac{\left(7p-3-(6p-6)s\right)(1-s)}{2\left[4(2p-3)s-4(2p-3)+(p+3)(1-s_c)\right]} }. 
\end{align*}
Therefore we complete the proof of Theorem \ref{thm:GWP}. 

\def\cprime{$'$}

\end{document}